\newcommand{\R}{\mathbb{R}}
\newcommand{\Bcal}{{\mathcal{B}}}
\newcommand{\inv}{^{-1}}
\newcommand{\ol}{\overline}
\newcommand{\sm}{\setminus}
\newtheorem{thm}{Theorem}
\newtheorem{lemma}[thm]{Lemma}
\newtheorem{prop}[thm]{Proposition}
\newtheorem{obs}[thm]{Observation}
\newtheorem{prob}{Problem}
\theoremstyle{definition}
\newtheorem{definition}[thm]{Definition}
\theoremstyle{remark}
\newtheorem*{remark}{Remarks}
\newtheorem*{ack}{Acknowledgements}
\newtheorem*{outline}{Outline}
\renewcommand{\emptyset}{\varnothing}
\DeclareMathOperator{\Hess}{Hess}
\DeclareMathOperator{\Ric}{Ric}
\DeclareMathOperator{\Int}{int}
\begin{document}
\title[Fill-ins of nonnegative scalar curvature]{Fill-ins of nonnegative scalar curvature, static metrics, and quasi-local mass}
\author{Jeffrey L. Jauregui}
\address{Dept. of Mathematics,
David Rittenhouse Lab,
209 S. 33rd Street,
Philadelphia, PA 19104}
\email{jauregui@math.upenn.edu}

\begin{abstract}
Consider a triple of ``Bartnik data'' $(\Sigma, \gamma,H)$, where $\Sigma$ is a topological 2-sphere with 
Riemannian metric $\gamma$ and positive function $H$.  We view Bartnik data as a boundary condition for 
the problem of finding a compact Riemannian 3-manifold $(\Omega,g)$ of nonnegative scalar curvature whose 
boundary is isometric to $(\Sigma,\gamma)$ with mean curvature $H$.  Considering the perturbed 
data $(\Sigma, \gamma, \lambda H)$ for a positive real parameter $\lambda$, we find that such a ``fill-in'' 
$(\Omega,g)$ must exist for $\lambda$ small and cannot exist for $\lambda$ large; moreover, we prove there 
exists an intermediate threshold value.

The main application is the construction of a new quasi-local mass, a concept of interest in general relativity.  
This mass has a nonnegativity property and is  bounded above by the Brown--York mass.  However,
our definition differs from many others in that it tends to vanish 
on static vacuum (as opposed to flat) regions.  We also recognize this mass as a special case of a type of 
twisted product of quasi-local mass functionals.  

\end{abstract}

\maketitle

\section{Introduction}
Riemannian 3-manifolds of nonnegative scalar curvature arise naturally in general relativity as totally geodesic
spacelike submanifolds of spacetimes obeying Einstein's equation and the dominant energy condition.  In this
setting, scalar curvature plays the role of energy density.  Black holes in this setting are 
manifested as connected minimal surfaces that minimize area to the outside.  
If $S$ is a disjoint union of such surfaces of total area $A$, the number $\sqrt{\frac{A}{16\pi}}$ is interpreted to encode the total mass of the collection of black holes, possibly accounting for potential energy between them \cite{bray_RPI}.

A fundamental question in general relativity is to quantify how much mass is contained in a compact region
$\Omega$ in a spacelike slice of a spacetime \cite{penrose}.  Constructing examples of such \emph{quasi-local mass} 
has led to a very active field of research (we mention here a small number of possible references: \cites{sza, wang_yau, 
imcf}).  For most definitions,
the quasi-local mass of $\Omega$ depends only boundary data of $\Omega$: namely the induced 2-metric
and induced mean curvature function.  We reference pioneering work of Bartnik \cites{bartnik_tsing, bartnik_mass}, 
whose name is given in the following definition.
 
All metrics and functions in this paper are assumed to be smooth, unless otherwise stated.
\begin{definition}
A triple $\Bcal=(\Sigma, \gamma, H)$, where $\Sigma$ is a topological 2-sphere, $\gamma$ is a Riemannian 
metric on $\Sigma$ of positive Gauss curvature, and $H$ is a positive function on $\Sigma$ is called 
\textbf{Bartnik data}.
\end{definition}
While not always necessary, it is often customary to restrict to positive Gauss curvature and positive 
functions $H$, as we do here.  A typical problem involving Bartnik data $(\Sigma, \gamma, H)$ is to construct a Riemannian
3-manifold $(M,g)$ satisfying some nice geometric properties such that the boundary $\partial M$ is isometric
to $(\Sigma, \gamma)$, and the mean curvature of $\partial M$ agrees with $H$.  For instance, one might
require $(M,g)$ to be asymptotically flat with nonnegative or zero scalar curvature (see \cite{bartnik_qs}, 
for instance).  Such a manifold is called an \emph{extension} of the Bartnik data.

We focus on the dual problem of constructing compact \emph{fill-ins} of the Bartnik data, realizing
$(\Sigma, \gamma, H)$ as the boundary of a compact 3-manifold.  This problem was considered by Bray in the construction of the Bartnik inner mass \cite{bray_RPI} (see section \ref{sec_inner_mass} below). 
\begin{definition}
A \textbf{fill-in} of Bartnik data $(\Sigma, \gamma, H)$ is a compact, connected Riemannian 3-manifold $(\Omega,g)$
with boundary such that there exists an isometric embedding $\iota:(\Sigma,\gamma) \to (\Omega, g)$ with the 
following properties:
\begin{enumerate}
 \item the image $\iota(\Sigma)$ is some connected component $S_0$ of $\partial \Omega$, and
 \item $H=H_{S_0} \circ \iota$ on $\Sigma$, where $H_{S_0}$ is the mean curvature of $S_0$ in $(\Omega, g)$.
\end{enumerate}
We adopt the sign convention that the mean curvature equals $-g(\vec H, \vec n)$, where $\vec H$ is the mean curvature
vector and $\vec n$ is the unit normal pointing out of $\Omega$ (e.g., the boundary of a ball in $\R^n$ has positive mean curvature).
\end{definition}
Without loss of generality, if $(\Omega, g)$ is a fill-in of $(\Sigma, \gamma, H)$,
we shall henceforth identify $\Sigma$ with $\iota(\Sigma)$ and $H$ with the mean curvature of $\iota(\Sigma)$.

We will primarily be concerned with fill-ins satisfying the following geometric constraints.
\begin{definition}
A fill-in $(\Omega,g)$ of $(\Sigma, \gamma, H)$ is \textbf{valid} if the metric $g$ has 
nonnegative scalar curvature and either
\begin{enumerate}
 \item $\partial \Omega=\Sigma$, or
 \item $\partial \Omega \sm \Sigma$ is a minimal (zero mean curvature) surface, possibly disconnected.
\end{enumerate}
\end{definition}
Figure \ref{fig_valid_fill_ins} provides a graphical depiction.
In physical terms, a valid fill-in is a compact region in a slice of a spacetime that has nonnegative energy 
density and possibly contains black holes.  Another characterization of the second class of valid fill-ins is 
a cobordism of nonnegative scalar curvature that joins the given Bartnik data to a minimal surface.  Note that
we require $\partial \Omega \sm \Sigma$ to be minimal, but not necessarily area-minimizing.  Figure
\ref{fig_valid_fill_ins} provides a graphical depiction.

\begin{figure}[ht]
\caption{Valid fill-ins of Bartnik data}
\begin{center}
\includegraphics[scale=0.7]{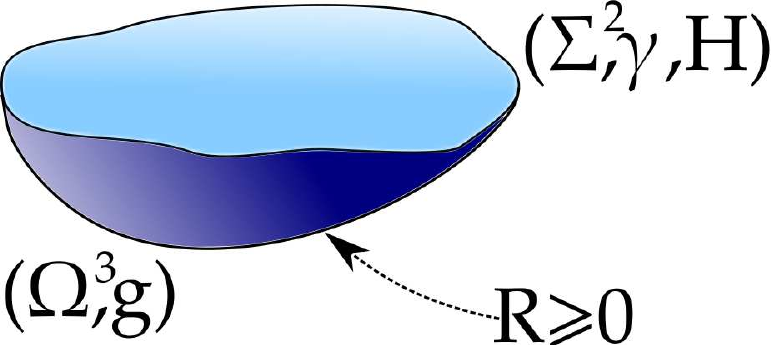}
\includegraphics[scale=0.3]{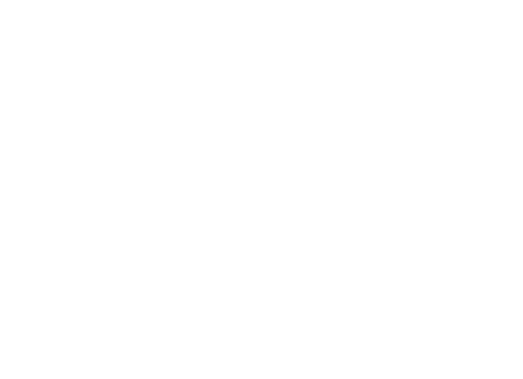}
\includegraphics[scale=0.7]{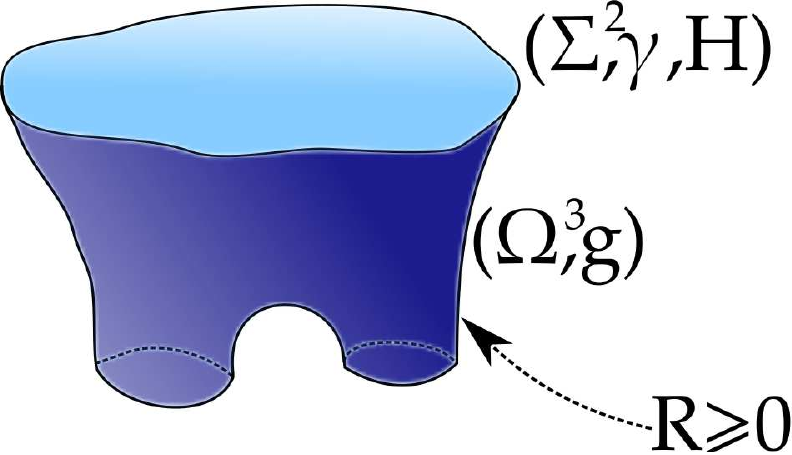}
\end{center}
\flushleft\footnotesize{On the left is a valid fill-in of $(\Sigma, \gamma, H)$ of the first type 
(i.e., $\partial \Omega = \Sigma$).  On the right is a valid fill-in of the second type ($\partial \Omega
\sm \Sigma$ is minimal). $R$ denotes the scalar curvature of $g$.}
\label{fig_valid_fill_ins}
\end{figure}

Interestingly, Bartnik data falls into one of three types.  Although trivial to prove, 
the following fact motivates much of the present paper.
\begin{obs}[Trichotomy of Bartnik data]  Bartnik data $(\Sigma, \gamma, H)$ belongs to exactly one
 of the following three classes:
\begin{enumerate}
 \item Negative type: $(\Sigma, \gamma, H)$ admits no valid fill-in.
 \item Zero type: $(\Sigma, \gamma, H)$ admits a valid fill-in, but every valid fill-in $(\Omega,g)$ has $\partial \Omega = \Sigma$.
 \item Positive type: $(\Sigma, \gamma, H)$ admits a valid fill-in $(\Omega, g)$ with nonempty minimal boundary $\partial \Omega \sm \Sigma$.
\end{enumerate}
\end{obs}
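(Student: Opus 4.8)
The plan is to treat this as a pure case analysis: nothing geometric is needed beyond a careful reading of the definitions of \emph{fill-in} and \emph{valid fill-in}. I would organize the argument into two parts — showing the three classes are pairwise mutually exclusive, and showing they are jointly exhaustive.

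For mutual exclusivity, I would observe the following. Classes (1) and (2), and classes (1) and (3), are incompatible because (1) asserts that $(\Sigma,\gamma,H)$ admits \emph{no} valid fill-in, while (2) and (3) each assert the \emph{existence} of a valid fill-in. Classes (2) and (3) are incompatible because (2) asserts that \emph{every} valid fill-in $(\Omega,g)$ satisfies $\partial\Omega=\Sigma$, whereas (3) exhibits a valid fill-in with $\partial\Omega\sm\Sigma\neq\es$.

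For exhaustiveness, I would split on whether a valid fill-in exists at all. If none exists, $(\Sigma,\gamma,H)$ is of negative type. If one does exist, split again on whether every valid fill-in $(\Omega,g)$ has $\partial\Omega=\Sigma$: if so, the data is of zero type; if not, fix a valid fill-in $(\Omega,g)$ with $\partial\Omega\neq\Sigma$. Since $\Sigma$ is a component of $\partial\Omega$, the hypothesis $\partial\Omega\neq\Sigma$ means $\partial\Omega\sm\Sigma\neq\es$. As $(\Omega,g)$ is valid but does not satisfy alternative (1) in the definition of validity, it must satisfy alternative (2), so $\partial\Omega\sm\Sigma$ is a minimal surface, and it is nonempty. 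Hence $(\Sigma,\gamma,H)$ is of positive type, and the three cases are exhausted.

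The only step warranting any attention — and it is a very mild point rather than a real obstacle — is the last one: a valid fill-in with $\partial\Omega\neq\Sigma$ necessarily has $\partial\Omega\sm\Sigma$ minimal, which is forced by the dichotomy already built into the definition of validity. I do not anticipate any genuine difficulty.
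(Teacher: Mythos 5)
Your case analysis is correct and is exactly the ``trivial'' argument the paper has in mind (the paper states the trichotomy without proof, remarking only that it is trivial to prove): mutual exclusivity and exhaustiveness both follow directly from the two alternatives built into the definition of a valid fill-in. Nothing further is needed.
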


\begin{outline}
In section \ref{sec_fill_ins}, we give some geometric characterizations of valid fill-ins of Bartnik data
of zero and positive type, making connections with static vacuum metrics. We also recall in section
\ref{sec_inner_mass} the Bartnik inner mass, which explains the use of the words positive, zero, 
and negative in the trichotomy.

The essential idea of this paper, presented in section \ref{sec_interval}, is to study the behavior 
of Bartnik data $(\Sigma, \gamma, \lambda H)$, where the real parameter $\lambda > 0$ is allowed to vary.  
We show in Theorem \ref{thm_interval}, the main result, that the data passes through
all three classes of the trichotomy, with interesting behavior at some unique borderline value 
$\lambda=\lambda_0$.  In section \ref{sec_inner_mass_function}, we introduce a function that probes 
the geometry of valid fill-ins of $(\Sigma, \gamma, \lambda H)$.

The main application occurs in section \ref{sec_ql_mass}, where we use the number $\lambda_0$
to define a quasi-local mass for regions in 3-manifolds of nonnegative scalar curvature (Definition \ref{def_ql_mass}).  Several
properties are shown to hold, including nonnegativity.  
What distinguishes this definition from most others is its tendency to vanish on static vacuum,
as opposed to flat, data.  We give a brief physical argument for why such a property may be desirable in section \ref{sec_physical}.

Section \ref{sec_examples} consists of examples of Bartnik data of all three types,
and compares our definition with the Hawking mass and Brown--York mass.
In section \ref{sec_product} 
we introduce a general construction for ``twisting'' two quasi-local mass functionals
together, of which the above quasi-local mass is a special case.  The final section is a 
discussion of some potentially interesting open problems.
\end{outline}

\begin{ack}
The author is indebted to Hubert Bray for suggesting the main idea of varying the parameter $\lambda$.  He would like
to thank the referee for carefully reading the work and offering a number of thoughtful suggestions.
\end{ack}

\section{Fill-ins of nonnegative type and the inner mass}
\label{sec_fill_ins}

\subsection{Zero type data and static vacuum metrics}
First, we classify the geometry of valid fill-ins of Bartnik data of zero type.  Recall that a Riemannian
3-manifold $(\Omega,g)$ is \emph{static vacuum} if there exists a function $u\geq 0$ (called the \emph{static
potential}), with $u>0$ on the interior of $\Omega$, such that the Lorentzian metric
$$h = -u^2 dt^2 + g$$
on $\R \times \Int( \Omega)$ has zero Ricci curvature.  This condition is equivalent to the system of equations:
\begin{align}
\Delta u &=0 \label{eqn_static1}\\
u\Ric &= \Hess u \label{eqn_static2}
\end{align}
where $\Delta, \Ric$ and $\Hess$ are the Laplacian, Ricci curvature, and Hessian with respect to $g$.
Equation (\ref{eqn_static1}) together with the trace of (\ref{eqn_static2}) shows that 
static vacuum metrics have zero scalar curvature.  The following result is primarily a consequence of Corvino's work on local scalar 
curvature deformation \cite{corvino}.
\begin{prop}
\label{prop_static}
If $\Bcal$ is Bartnik data of zero type, then any valid fill-in is static vacuum.
\end{prop}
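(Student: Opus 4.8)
The plan is to argue by contradiction using Corvino's local scalar curvature deformation theorem. Suppose $(\Omega,g)$ is a valid fill-in of zero-type data $\Bcal$. Since $\Bcal$ has zero type, $\partial\Omega = \Sigma$; in particular there is no minimal boundary to worry about, and $R_g \geq 0$. If $g$ were \emph{not} static vacuum, then by definition there is no nonnegative function $u$ (positive on the interior) solving the static system, and in particular $g$ admits no nontrivial solution of the linearized-scalar-curvature adjoint equation with the right boundary behavior. Corvino's theorem then says: given any point $p \in \Int(\Omega)$ and a small ball $B$ around it on which $g$ is not static (equivalently, the formal adjoint $DR_g^*$ of the linearized scalar curvature has trivial kernel on $B$), one can perturb $g$ inside $B$ to a metric $g'$ with $g' = g$ near $\partial B$ and with $R_{g'}$ equal to any prescribed function $C^0$-close to $R_g$ — in particular, one can \emph{increase} the scalar curvature slightly to be strictly positive on $B$ while keeping it nonnegative elsewhere, leaving the metric (hence $\gamma$ and $H$) unchanged near $\partial\Omega$.

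The next step is to upgrade this strictly-positive-scalar-curvature fill-in to one with a minimal boundary, contradicting zero type. The idea: once $R_{g'} > 0$ on an open set, one has room to deform the metric near $\partial\Omega$ in an outward collar, bending it so as to create a minimal surface just inside $\Sigma$ while using up the positivity of scalar curvature as a "budget." Concretely, I would glue a short cylindrical-type collar or use a conformal/warped-product deformation supported near $\Sigma$: push the boundary inward along the collar so the mean curvature decreases to zero on a nearby sphere, estimating the resulting scalar curvature change and absorbing it into the strictly positive $R_{g'}$ near that region. This produces a valid fill-in $(\Omega',g')$ of $\Bcal$ whose boundary $\partial\Omega' \sm \Sigma$ is a nonempty minimal surface, so $\Bcal$ has positive type — contradicting the hypothesis that it has zero type. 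Hence $g$ must be static vacuum.

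I should be slightly careful about where the deformation is applied and about the regularity/openness hypotheses in Corvino's theorem (it is genuinely local, so applying it on a small interior ball is exactly the intended use); the statement that "non-static-vacuum" implies "$DR_g^*$ has trivial kernel somewhere in the interior" is essentially the content of the static equations being the obstruction to local deformation, so this matches Corvino's setup precisely. The main obstacle I anticipate is the second step: carefully constructing the collar deformation that trades a small amount of positive scalar curvature for a minimal boundary, and verifying that the new metric genuinely has $R \geq 0$ everywhere and agrees with the original Bartnik data on $\Sigma$. This is a quantitative "bending" construction — one wants the scalar-curvature error from introducing the minimal neck to be controlled by (and smaller than) the positive scalar curvature gained from Corvino's perturbation. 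Once that localized construction is in hand, the contradiction with the definition of zero type is immediate.
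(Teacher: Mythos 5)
Your first step---using Corvino's local deformation to replace a non-static-vacuum valid fill-in by one with strictly positive scalar curvature on an interior ball, while leaving $(\gamma,H)$ on $\Sigma$ untouched---is essentially the same reduction the paper makes (the paper merely orders it differently, first ruling out $R>0$ anywhere and only then invoking Corvino for the scalar-flat non-static case). The genuine gap is in your second step. You propose to trade the positive scalar curvature for a minimal surface by bending a collar near $\Sigma$ so that the mean curvature drops to zero on a nearby leaf. By the radial variation formula $R_g=-2\,\partial_t H_t+2K_t-H_t^2-\|h_t\|^2$ (equation (\ref{eq_second_variation})), forcing $H_t$ to fall from $H>0$ at $\Sigma$ to $0$ at distance $t_1$ makes $\partial_t H_t$ positive with average size $H/t_1$, which costs scalar curvature of order $H/t_1$ pointwise in the collar. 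That cost is bounded below independently of the Corvino perturbation, whose positive scalar curvature is both quantitatively small (only $C^0$-close to the original scalar curvature) and supported on an interior ball around $p$, generally nowhere near the collar. If an arbitrarily small budget sufficed for such a bending, every valid fill-in could be upgraded to one with minimal boundary, which is false: zero-type data exists (e.g., boundaries of convex domains in $\R^3$). So the quantitative bookkeeping you flag as the "main obstacle" does not close.

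The missing idea is that the new boundary component should be manufactured at the point $p$ where $R>0$, not near $\Sigma$, and it only needs \emph{positive outward} mean curvature, not zero. The paper takes a Green's function $G$ with pole at $p$, forms the conformal metric $(1+\delta\tilde G)^4 g$ on $\Omega\sm\{p\}$ (the scalar curvature lower bound $\epsilon>0$ on the ball absorbs the error term $-8\delta\Delta\tilde G$ coming from smoothing $G$), and observes that small geodesic spheres about $p$ then have negative mean curvature in the direction away from $p$. Excising a small ball around $p$ yields a manifold with two boundary components, both mean-convex outward, still inducing $(\gamma,H)$ on $\Sigma$. The minimal surface is then obtained not by explicit construction but by minimizing area in the homology class of $\Sigma$, with the two mean-convex boundaries acting as barriers (Lemma \ref{lemma_pos_mc}, via Theorem \ref{gmt_lemma}); the region between $\Sigma$ and the minimizer is a valid fill-in with nonempty minimal boundary, contradicting zero type.
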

The idea of the proof is to use a valid fill-in that is not static vacuum to construct a valid fill-in 
that contains a black hole.  By a very rough analogy, one might think of this physically as taking some of the energy content
in a fill-in and squeezing it down into a black hole. The delicate issue is that we 
must preserve the boundary data in the process.

\begin{proof}
Let $(\Omega, g)$ be a valid fill-in of zero type data $(\Sigma, \gamma, H)$.  By definition, $\partial \Omega=\Sigma.$

We claim $g$ has identically zero scalar curvature.  If not, there exists $p \in \Int(\Omega)$ and $r>0$
such that on the closed metric ball $\ol B(p,r)$, the scalar curvature of $g$ is bounded below by some 
$\epsilon>0$.  On the set $\ol B(p,r/2) \sm \{p\}$, let $G$ be a Green's function for the Laplacian 
that blows up at $p$ and vanishes on $\partial \ol B(p,r/2)$ (see Theorem 4.17 of \cite{aubin}.) By the 
maximum principle, $G$ is positive, except on $\partial \ol B(p,r/2)$.  Extend $G$
by zero to the rest of $\Omega \sm \{p\}$, so that $G$ Lipschitz, smooth away from $\partial \ol B(p,r/2)$.  Perturb
$G$ to a smooth, nonnegative function $\tilde G$ on $\Omega \sm \{p\}$ that agrees with $G$ except possibly 
on the annular region $\ol B(p, 3r/4) \sm \ol B(p, r/4)$.  For a parameter $\delta > 0$ to be determined, define
the conformal metric
$$\tilde g = (1+ \delta \tilde G)^4 g$$
on $\Omega \sm \{p\}$.  By construction, $\tilde g = g$ outside $\ol B(p,r)$ and thus has nonnegative scalar
curvature outside this ball.  For points inside $\ol B(p,r)$, we apply the rule for the change in scalar
curvature under a conformal deformation (see appendix \ref{appendix_formulas}):
\begin{align*}
\tilde R &= (1+\delta \tilde G)^{-5}(-8\Delta(1 + \delta \tilde G) + (1+\delta \tilde G) R)\\
	&\geq (1+\delta \tilde G)^{-5} (-8\delta  \Delta \tilde G + \epsilon),
\end{align*}
Here, $\tilde R$ and $R$ are the scalar
curvatures of $\tilde g$ and $g$.  
Since $ \Delta \tilde G$ has compact support, we may choose $\delta>0$ sufficiently small so that the above is 
strictly positive.  In particular, $\tilde R \geq 0$ on $\Omega \sm \{p\}$.  

Now, suppose $s$ is the distance function with respect to $g$ from the point $p$.  For $s$ sufficiently
small, $G$ is of the form $\frac{c}{s}+O(1)$ for some constant $c>0$.  The normal derivative of $G$
to the sphere of radius $s$ about $p$ in the outward direction is $-\frac{c}{s^2} + O(s\inv)$ (see
Proposition 4.12 and Theorem 4.13 of \cite{aubin}).  The mean 
curvature of the sphere of radius $s$ with respect to $g$ is $\frac{2}{s}+O(1)$ (by Lemma 3.4 of \cite{fan_shi_tam}), and so the 
mean curvature of this sphere with respect to $\tilde g$ is (using appendix \ref{appendix_formulas}):
$$(1+ \delta \tilde G)^{-3} \left((2s\inv + O(1))(1+\delta c s\inv + O(1))-4\delta c s^{-2} + O(s\inv)\right).$$
The dominant term is $-2\delta c s^{-2}$, so that for some
$s>0$ sufficiently small, $\partial \ol B(p,s)$ has negative mean curvature (with respect to $\tilde g$) in direction
pointing away from $p$.   Let $\tilde \Omega$
be $\Omega \sm B(p,s)$, and restrict $\tilde g$ to $\tilde \Omega$.

The manifold $(\tilde \Omega, \tilde g)$ has boundary with two connected components, both of positive mean 
curvature in the \emph{outward} direction.  By Lemma \ref{lemma_pos_mc} below, $(\tilde \Omega, \tilde g)$ contains 
a subset that is a valid fill-in of $(\Sigma, \gamma, H)$ with a minimal boundary component. 
This contradicts the assumption that the Bartnik data is of zero type, and so we have proved the claim
that $g$ is scalar-flat.

Finally, if $(\Omega, g)$ is not static vacuum, then
Corvino proves the existence of a metric $\ol g$ on $\Omega$ with nonnegative, scalar curvature, positive
at some interior point $p$, such that $g - \ol g$ is supported away from $\partial \Omega$ \cite{corvino}.  
In particular,
$(\Omega, \ol g)$ is a valid fill-in for the type-zero data $(\Sigma,\gamma, H)$, and the above argument leads to a contradiction.
\end{proof}

To complete the proof of the previous proposition, we have the following lemma:
\begin{lemma}
Suppose $\Bcal=(\Sigma, \gamma,H)$ admits a fill-in $(\Omega,g)$ with nonnegative scalar curvature, such 
that $\partial \Omega \sm \Sigma$ has positive mean curvature in the outward direction.  Then a subset
$\Omega'$ of $\Omega$ is a valid fill-in of $\Bcal$ with metric $g|_{\Omega'}$.  Moreover,
$\Omega'$ has at least one minimal boundary component.
\label{lemma_pos_mc}
\end{lemma}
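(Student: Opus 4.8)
The plan is to produce the minimal surface inside $\Omega$ by a geometric‑measure‑theory minimization, exploiting that \emph{both} boundary components are mean‑convex: since $H>0$ with respect to the outward normal, the mean curvature vector of each of $\Sigma$ and of $\Sigma_-:=\partial\Omega\sm\Sigma$ (necessarily nonempty, otherwise there is nothing to prove) points into $\Omega$. Fix disjoint thin collar neighborhoods $N$ of $\Sigma$ and $N'$ of $\Sigma_-$ in $\Omega$, and minimize the relative perimeter $\mathcal H^2(\partial E\cap\Int\Omega)$ over all finite‑perimeter sets $E\subseteq\Omega$ with $N'\subseteq E$ and $E\cap N=\es$. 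A thin collar of $\Sigma_-$ is a finite competitor, and the infimum is strictly positive: an $L^1$‑subsequential limit of a minimizing sequence is a finite‑perimeter set $E_\infty$ of zero relative perimeter in $\Int\Omega$, hence (mod null sets) $\es$ or $\Int\Omega$ by connectedness, contradicting $N'\subseteq E_\infty\subseteq\Omega\sm N$. Federer--Fleming compactness and lower semicontinuity of perimeter then yield a minimizer $E_0$; since $\dim\Omega=3$ (well below the dimension where singularities appear), the interior boundary $S':=\partial E_0\cap\Int\Omega$ is a smooth embedded closed surface, and it is nonempty because $E_0$ is, mod null sets, neither $\es$ nor $\Int\Omega$.

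The crux — and the step I expect to be the main obstacle — is to show that neither constraint is active, so that $S'$ is a genuine minimal surface lying strictly in the interior; this is exactly where strict mean‑convexity of the boundary (that is, $H>0$, not merely $R\ge 0$) is essential. For small collars the parallel surfaces $\Sigma^t$ and $\Sigma_-^t$, the level sets of the distance to $\Sigma$ resp. $\Sigma_-$ inside $\Omega$, retain positive mean curvature in the direction of increasing $t$, by the Riccati comparison for the shape operator along the normal geodesics. If $S'$ agreed with a positive‑measure piece of $\partial N=\Sigma^\delta$, one could modify $E_0$ near that piece by moving the boundary inward from $\Sigma^\delta$ to the nearby $\Sigma^{\delta+s}$, of strictly smaller area, without violating either constraint — contradicting minimality; an analogous argument at $\Sigma_-$ (now enlarging $E_0$ across $\partial N'$) shows $S'$ avoids $\partial N'$, and $S'$ cannot meet $\Sigma_-\subseteq\partial\Omega$ since a neighborhood of $\Sigma_-$ lies in $E_0$. (Equivalently, invoke the strong maximum principle: an area‑minimizer cannot touch a strictly mean‑convex barrier from its mean‑convex side.) Carrying out the perimeter bookkeeping for these comparison sets carefully is the only real technical point; conceptually it is the standard fact that minimizers in a mean‑convex domain stay in the interior. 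Hence $S'\subseteq\Int\Omega$ is a nonempty, smooth, closed, embedded \emph{minimal} surface.

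Finally, let $\Omega'$ be the closure of the connected component of $\Omega\sm\ol{E_0}$ containing $\Sigma$ (it does, since $\ol{E_0}$ misses the collar $N$). Then $\Omega'$ is a compact connected $3$‑manifold with smooth boundary, $g|_{\Omega'}$ has nonnegative scalar curvature, and near $\Sigma$ the metric and outward normal are unchanged, so $\Sigma\subseteq\partial\Omega'$ still has mean curvature $H$; thus $\Omega'$ is a fill‑in of $\Bcal$. Since $\Omega'$ is disjoint from a neighborhood of $\Sigma_-$, we have $\partial\Omega'\subseteq\Sigma\cup S'$, and $\partial\Omega'\cap S'\neq\es$: otherwise $\Omega'$ would be a proper nonempty open‑and‑closed subset of the connected manifold $\Omega$ (using $E_0\neq\es$), which is impossible. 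As $S'$ is a two‑sided embedded surface, $\partial\Omega'\sm\Sigma$ is then a nonempty union of components of the minimal surface $S'$, so $(\Omega',g|_{\Omega'})$ is a valid fill‑in of $\Bcal$ of the second type possessing a minimal boundary component, as claimed.
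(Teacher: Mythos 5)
Your argument is correct and is essentially the paper's: the paper simply invokes its Appendix~B result (Federer--Fleming compactness, interior regularity in dimension $\leq 7$, and the strictly mean-convex boundary acting as a barrier) to obtain a smooth embedded minimal surface homologous to $\Sigma$ not touching $\partial\Omega$, and then takes the closure of the region between $\Sigma$ and that surface, exactly as you do. Your write-up merely unfolds the proof of that cited theorem via constrained perimeter minimization between collars, which is the standard route (cf.\ the appendix of Schoen--Yau that the paper references).
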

\begin{proof}
By assumption $\Sigma$ has positive mean curvature $H$ and $\partial \Omega \sm \Sigma$ has positive mean
curvature.  By Theorem \ref{gmt_lemma} in appendix \ref{appendix_gmt}, there exists a smooth, embedded minimal 
surface $S$ homologous to $\Sigma$.  The closure of the region bounded between $\Sigma$ and $S$ is the desired 
valid fill-in.
\end{proof}

\subsection{Data of positive type}
\begin{prop} 
\label{prop_positive}
Given Bartnik data $\Bcal$, the following are equivalent:
\begin{enumerate}
\item $\Bcal$ is of positive type.
\item $\Bcal$ admits a valid fill-in that has positive scalar curvature at some point.
\item $\Bcal$ admits a valid fill-in that has positive scalar curvature everywhere.
\end{enumerate}
\end{prop}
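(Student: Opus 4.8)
The plan is to prove $(3)\Rightarrow(2)\Rightarrow(1)\Rightarrow(3)$, the last implication being where all the work lies.

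The implication $(3)\Rightarrow(2)$ is trivial. For $(2)\Rightarrow(1)$: let $(\Omega,g)$ be a valid fill-in with $R_g(p)>0$ at some interior point $p$. If $\partial\Omega\sm\Sigma$ is a nonempty minimal surface then $\Bcal$ is of positive type by definition, so suppose $\partial\Omega=\Sigma$. In that case one runs the construction from the proof of Proposition \ref{prop_static}: choose $r>0$ with $R_g\geq\epsilon>0$ on $\ol B(p,r)$, conformally deform $g$ by a smoothed Green's function centered at $p$ (this is trivial near $\Sigma$, hence preserves the Bartnik data, and keeps scalar curvature nonnegative), and excise a small geodesic sphere about $p$; what remains is a fill-in of $\Bcal$ with nonnegative scalar curvature whose non-$\Sigma$ boundary component has positive outward mean curvature, so Lemma \ref{lemma_pos_mc} produces a valid fill-in with a minimal boundary component, and $\Bcal$ is of positive type.

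Now consider $(1)\Rightarrow(3)$. Take a valid fill-in $(\Omega,g)$ of positive type, with $R_g\geq 0$ and minimal boundary $S=\partial\Omega\sm\Sigma\neq\es$. The subtlety is that $g$ may be scalar-flat --- for instance a piece of spatial Schwarzschild --- and even static vacuum in its interior, so that a conformal deformation alone cannot suffice (a conformal change trivial near $\partial\Omega$ preserves the vanishing of a weighted integral of the scalar curvature, which vanishes for scalar-flat $g$), and Corvino's perturbative deformation does not apply directly on a static region. I would therefore argue in three stages. \emph{First}, modify $g$ near $\partial\Omega$: the definition of valid fill-in constrains only $(\gamma,H)$ on $\Sigma$ and only the minimality of $S$, so one may change the traceless second fundamental form of $\Sigma$ and the higher normal jets of $g$ along each boundary component; doing this one obtains a valid fill-in $(\Omega_1,g_1)$ with $R_{g_1}\geq 0$ everywhere and $R_{g_1}>0$ on one-sided neighborhoods of $\Sigma$ and of $S$ --- near $S$ via a collar construction in the spirit of Mantoulidis--Schoen, which has positive interior scalar curvature while retaining a minimal boundary. \emph{Second}, arrange $R_{g_1}>0$ at an interior point of the remaining region $K$, where $R_{g_1}$ may still vanish identically: pick a ball $B\subset\Int K$; if $g_1$ is non-static on $B$, Corvino's theorem \cite{corvino} gives a metric equal to $g_1$ off $B$, with $R\geq 0$ globally and $R>0$ near the center of $B$; if $g_1|_B$ is static, first apply a small generic perturbation supported in $B$ --- metrics static on an open set form a nowhere-dense set --- to make $g_1$ non-static on a slightly larger ball (with scalar curvature still $C^0$-small there), then apply Corvino as before. \emph{Third}, spread positivity by a conformal change $\tilde g=v^4 g_1$ with $v>0$ and $v\equiv 1$ to first order along $\partial\Omega_1$ (so $(\gamma,H)$ and the minimality of $S$ persist), chosen so that $-8\Delta_{g_1}v+R_{g_1}v>0$ throughout (appendix \ref{appendix_formulas}); writing $v=1+w$ this asks for a small $w$ with $w$ and $\partial_\nu w$ vanishing on $\partial\Omega_1$, with $\Delta w<0$ on the compact set $\{R_{g_1}=0\}$ (which lies well inside $\Omega_1$ after the first stage) and $\Delta w$ small elsewhere, the constraint $\int_{\Omega_1}\Delta w=0$ being met by letting $\Delta w$ be small and positive where $R_{g_1}>0$. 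Such a $w$ exists by a direct construction, and $\tilde g$ is then a valid fill-in of $\Bcal$ with everywhere positive scalar curvature.

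The main obstacle is the static case in the second stage: precisely because a positive-type fill-in can be scalar-flat, or even static vacuum, in its interior, one cannot argue purely conformally and cannot invoke Corvino's deformation off the shelf --- the remedy is to first destroy the static structure by a generic perturbation. Carrying out the collar modifications of the first stage and justifying this genericity statement are the technical points that require care.
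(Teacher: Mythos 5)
Your implications $(3)\Rightarrow(2)\Rightarrow(1)$ are fine (for $(2)\Rightarrow(1)$ the paper simply cites Proposition \ref{prop_static} --- a static vacuum metric is scalar-flat --- but re-running the Green's function excision as you do also works). The genuine gap is in $(1)\Rightarrow(3)$, concentrated in your first two stages. Stage 1 asserts that one can ``change the traceless second fundamental form and higher normal jets'' along $\Sigma$ and $S$ to gain $R>0$ in collars while keeping $(\gamma,H)$ and the minimality of $S$ exactly; this is precisely the hard gluing step, not a preliminary. The only tool of this kind used in the paper (Brendle--Marques--Neves, via Lemma \ref{lemma_positive}) splices a prescribed collar onto the given metric only when the new boundary mean curvature is \emph{strictly smaller} than the old one, so with $H$ held fixed its hypothesis fails. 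That is exactly why the paper's proof routes through the minimal boundary: it doubles $(\Omega,g)$ across $S$, applies a harmonic conformal factor with unequal Dirichlet values on the two copies to get a scalar-flat fill-in inducing $\gamma$ but with mean curvature $\tilde H>H$, turns on $R>0$ everywhere by a conformal-Laplacian perturbation (costing only a little mean curvature, so still $>H$), extracts a minimal-boundary fill-in by Lemma \ref{lemma_pos_mc}, and only then uses the strict slack $\tilde H_\delta>H$ to glue back down to $H$. Your proposal has no mechanism for creating that slack, at $\Sigma$ or at $S$.

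Stage 2 is also broken as stated. If $u>0$ is the static potential, then $\int_\Omega u\,\delta R\, dV=0$ for every metric variation supported in the interior; since $R\equiv 0$ and $u>0$ there, a \emph{generic} small perturbation makes $R$ negative somewhere already at first order. You then need Corvino to restore $R\geq 0$, but the size of deformation his theorem tolerates is controlled by the (now tiny) degree of non-staticity, and nothing prevents this from degenerating faster than the damage to be repaired. In fact the purely-interior strategy cannot work in general: applied to a static vacuum fill-in of \emph{zero}-type data it would produce a valid fill-in with $R>0$ at a point, contradicting Proposition \ref{prop_static} together with $(2)\Rightarrow(1)$. Your argument nowhere uses the nonempty minimal boundary $S$ --- the one feature distinguishing positive from zero type --- so it cannot close this case. (Your Stage 3, spreading positivity by a compactly supported superharmonic conformal factor, is fine once the first two stages are granted.)
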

The idea of proving the proposition is to create positive energy density at some interior points
at the expense of decreasing the size of the minimal surface.  As in the previous section, the delicate issue is preserving the boundary data in the process.

\begin{proof}
If $\Bcal$ admits a valid fill-in with positive scalar curvature at a point, then $\Bcal$ is of nonnegative
type and Proposition \ref{prop_static} rules out the case of zero type (since static vacuum metrics have zero scalar curvature).  This shows $(2)$ implies $(1)$;
$(3)$ trivially implies $(2)$.

Last, we show $(1)$ implies $(3)$.  
Suppose $\Bcal=(\Sigma, \gamma, H)$ has positive type, so there exists some valid fill-in $(\Omega,g)$ of $\Bcal$
with boundary $\Sigma \dot \cup S$, where $S$ is a nonempty minimal surface.  If $(\Omega,g)$ is not static 
vacuum, we may complete the proof by again using the work of Corvino to perturb $(\Omega,g)$ to a valid fill-in with positive scalar 
curvature at a point \cite{corvino}.  Thus, assume $(\Omega,g)$
is static vacuum, and so in particular it is scalar-flat.

Replace $(\Omega,g)$ with its double across the minimal surface $S$.  Now, $(\Omega,g)$
has two boundary components $\Sigma$ and $\Sigma'$ (its reflected copy), and contains
a minimal surface $S$ that is fixed by the $\mathbb{Z}_2$ reflection symmetry.  Moreover, $g$
is Lipschitz continuous across $S$ and smooth elsewhere\footnote{This doubling trick across a minimal
surface was used by Bunting and Masood-ul-Alam to classify static vacuum metrics
with compact minimal boundary that are asymptotically flat \cite{bma}.  Because of the asymptotic condition, 
their theorem does not apply to the present case.  We also mention the fact that because of minimality
and the static vacuum condition, $S$ is totally  geodesic, which implies that $\tilde g$ is $C^{1,1}$ 
across $S$ \cites{bma, corvino}.  However, we do not need this fact.}.  For simplicity of exposition, we separately treat the cases in which $g$ is smooth and non-smooth across $S$.

\emph{Smooth case:  }
For $\epsilon \in(0,1)$, let $\varphi$ be the function on $\Omega$ solving the following Dirichlet problem:
\begin{equation*}
 \begin{cases}
  \Delta \varphi = 0 & \text{ on } \Omega,\\
  \varphi = 1 & \text { on } \Sigma,\\
  \varphi = 1-\epsilon & \text { on } \Sigma'.
 \end{cases}
\end{equation*}
Consider the conformal metric $\tilde g=\varphi^4 g$, which is smooth with zero scalar curvature.  Moreover, the mean curvature $\tilde H$ of $\Sigma$ with respect to $\tilde g$
strictly exceeds $H$ (for all choices of $\epsilon$), since $\varphi$ has positive outward normal derivative on $\Sigma$
(see appendix \ref{appendix_formulas}).  The mean curvature of $\Sigma'$ remains positive 
for $\epsilon >0$ sufficiently small.  Fix such an $\epsilon$.  This construction is demonstrated in figure
\ref{fig_reflection}.

\begin{figure}[ht]
\caption{Construction in proof of Proposition \ref{prop_positive}}
\begin{center}
\includegraphics[scale=0.7]{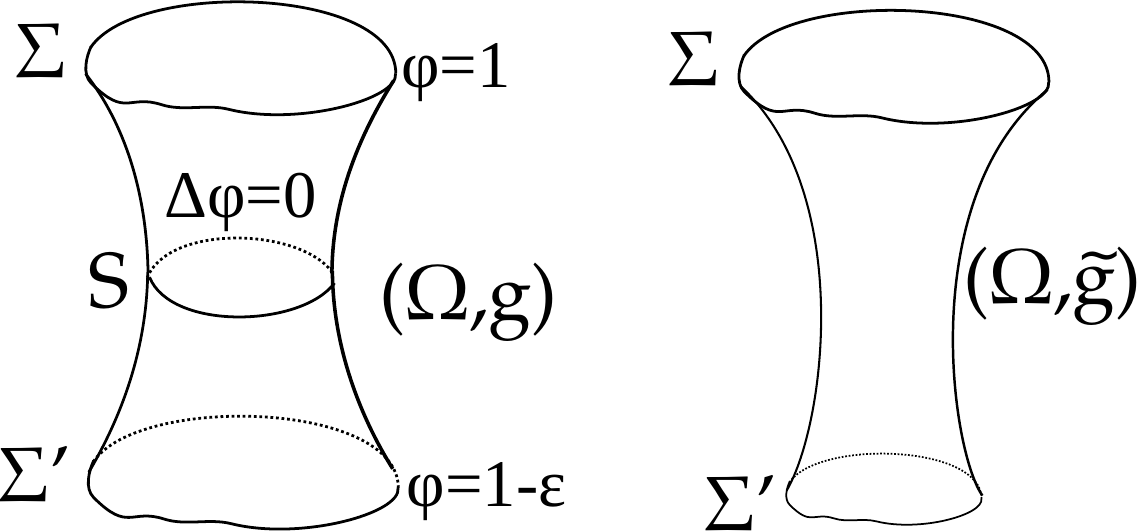}
\end{center}
\flushleft\footnotesize{On the left is the double of $(\Omega,g)$, which we also refer to as $(\Omega,g)$,
abusing notation. The function $\varphi$ is harmonic, with the given prescribed Dirichlet boundary values.
On the right is $\Omega$ equipped with the metric $\tilde g$, obtained from $g$ by applying the conformal
factor $\varphi^4$.}
\label{fig_reflection}
\end{figure}

Fix any smooth function $\rho>0$ on $\Omega$.  For all $\delta\geq 0$ small, let $u_\delta$ 
be the unique solution to the elliptic problem
\begin{equation*}
 \begin{cases}
  \tilde L u_\delta = \delta \rho & \text{ in } \tilde \Omega,\\
  u_\delta = 1 & \text{ on } \Sigma,\\
  \partial_\nu (u_\delta) = 0 & \text{ on } \Sigma',
 \end{cases}
\end{equation*}
where $\tilde L = -8\tilde\Delta$ is the conformal Laplacian of $\tilde g$.  Clearly $u_0 \equiv 1$,
and $u_\delta$ converges in $C^2$ to 1 as $\delta \to 0^+$.   For $\delta>0$ small enough to ensure $u_\delta > 0$,
the conformal metric $u_\delta^4 \tilde g$ has:
\begin{itemize}
\item positive scalar curvature (equal to $\delta\rho u_\delta^{-5}$),
\item induced metric on $\Sigma$ equal to $\gamma$ (by the boundary condition $u_\delta|_\Sigma=1$),
\item positive mean curvature on $\Sigma'$ (by the boundary condition $\partial_{\nu} (u_\delta)|_{\Sigma'}=0$), and
\item mean curvature on $\Sigma$ converging uniformly to $\tilde H$  as $\delta \to 0^+$.  
\end{itemize}
Fix a particular value of $\delta$ such that the mean curvature of $\Sigma'$ is positive
 and the mean curvature $\tilde H_\delta$ of $\Sigma$ is pointwise greater than $H$ 
(which is possible, since $\tilde H > H$).  By Lemma \ref{lemma_pos_mc}, there is a valid fill-in of $(\Sigma,\gamma,\tilde H_\delta)$
that contains a minimal surface.  By Lemma \ref{lemma_positive} in appendix \ref{appendix_bmn}, this valid fill-in can be perturbed to a valid fill-in of $(\Sigma, \gamma,H)$ so that the latter fill-in still has positive scalar curvature.

\emph{Lipschitz case:  } In general we must carry out an extra step to deal with the lack of smoothness across $S$.  
Define $\varphi$ analogously by first solving $\Delta \varphi_1=0$
with boundary conditions of $1$ on $\Sigma$ and $1-\frac{\epsilon}{2}$ on $S$, then
defining $\varphi_2 = 2 - \epsilon - \varphi_1$ in the reflected copy.  
The function $\varphi$ obtained by gluing $\varphi_1$ and $\varphi_2$
is $C^{1,1}$ on $\Omega$, and smooth and harmonic away from $S$.  Again, let $\tilde g=\varphi^4 g$, which has zero scalar curvature (away from $S$), is Lipschitz across $S$, and induces the same mean curvature on both sides of $S$.  Fix $\epsilon>0$ so that $\tilde H>H$ and the $\tilde g$-mean curvature of $\Sigma'$ is positive.

By the work of Miao \cite{miao}, the fact that both sides of $S$ have the same mean curvature implies the existence of a family of $C^2$ 
metrics $\{\tilde g_\delta\}_{0 < \delta < \delta_0}$ such that
\begin{enumerate}
 \item $\tilde g_\delta$ converges to $\tilde g$ in $C^0$ as $\delta \to 0^+$,
 \item $\tilde g_\delta$ agrees with $\tilde g$ outside a $\delta$-neighborhood of $S$, and
 \item the scalar curvature $\tilde R_\delta$ of $\tilde g_\delta$ is bounded below by a constant 
 independent of $\delta$.
\end{enumerate}
In particular, the $L^p$ norm of $\tilde R_\delta$ (taken with respect to $\tilde g$ or $\tilde g_\delta$)
for any $1 \leq p < \infty$ converges to zero as $\delta \to 0$.  We mimic arguments 
of Schoen and Yau \cite{schoen_yau} to prove:
\begin{lemma}
For each $\delta>0$ sufficiently small, the conformal Laplacian $\tilde L_\delta = 
-8\tilde \Delta_\delta  + \tilde R_\delta$ of $\tilde g_\delta$ has trivial kernel on the space of functions 
$v$ with boundary conditions of $v=0$ on $\Sigma$ and $\partial_\nu v = 0$ on $\Sigma'$.
\end{lemma}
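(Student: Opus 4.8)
The plan is to prove the slightly stronger statement that for every $\delta>0$ sufficiently small the bottom eigenvalue $\lambda_1(\tilde L_\delta)$ of $\tilde L_\delta$, subject to the Dirichlet condition on $\Sigma$ and the Neumann condition on $\Sigma'$, is \emph{strictly positive}; since $\tilde L_\delta$ is self-adjoint for these boundary conditions, positivity of the bottom of the spectrum immediately rules out a nontrivial kernel. I would work throughout with the variational characterization
\begin{equation*}
\lambda_1(\tilde L_\delta) = \inf\left\{\, \frac{\int_\Omega\left(8|\nabla v|^2_{\tilde g_\delta} + \tilde R_\delta\, v^2\right)dV_{\tilde g_\delta}}{\int_\Omega v^2\, dV_{\tilde g_\delta}} \ :\ v\in H^1(\Omega),\ v|_\Sigma = 0,\ v\not\equiv 0 \,\right\},
\end{equation*}
where the Neumann condition on $\Sigma'$ is the natural boundary condition and hence imposes no constraint on the admissible class; since $\tilde g_\delta$ is $C^2$ with smooth boundary $\Sigma\cup\Sigma'$, this infimum is attained by a first eigenfunction (alternatively one may work with almost-minimizing sequences and bypass all elliptic regularity). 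The heuristic is that the limit metric $\tilde g$ is scalar-flat, so its conformal Laplacian is simply $-8\tilde\Delta$, which is a positive operator under the Dirichlet condition on the nonempty boundary piece $\Sigma$; the content of the lemma is that this positivity survives the perturbation to $\tilde g_\delta$, whose scalar curvature may be negative somewhere but is small in $L^p$ by property (3).

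I would argue by contradiction. Suppose $\lambda_1(\tilde L_{\delta_j})\le 0$ along some sequence $\delta_j\to 0^+$, and let $v_j$ be corresponding first eigenfunctions normalized by $\int_\Omega v_j^2\, dV_{\tilde g_{\delta_j}}=1$. The first step is a uniform $H^1$ bound on $\{v_j\}$. Because $\tilde g_{\delta_j}\to\tilde g$ in $C^0$ on the compact manifold $\Omega$, for large $j$ the metrics $\tilde g_{\delta_j}$ are all uniformly equivalent to the (Lipschitz) metric $\tilde g$, so the Sobolev inequality $\|v\|^2_{L^6(\tilde g_{\delta_j})}\le C\bigl(\|\nabla v\|^2_{L^2(\tilde g_{\delta_j})} + \|v\|^2_{L^2(\tilde g_{\delta_j})}\bigr)$ holds with a constant $C$ independent of $j$. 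From $\lambda_1(\tilde L_{\delta_j})\le 0$ and $\int v_j^2 = 1$ one gets $8\int|\nabla v_j|^2 \le -\int\tilde R_{\delta_j}v_j^2 \le \|\tilde R_{\delta_j}\|_{L^{3/2}}\|v_j\|^2_{L^6}$ by Hölder (all norms with respect to $\tilde g_{\delta_j}$); inserting the Sobolev inequality, using the normalization, and using $\|\tilde R_{\delta_j}\|_{L^{3/2}}\to 0$, one obtains $\|\nabla v_j\|^2_{L^2}\le 1$ for $j$ large. Hence $\{v_j\}$ is bounded in $H^1(\Omega)$ with respect to any fixed background metric.

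The second step is to pass to the limit. Along a subsequence, $v_j\rightharpoonup v$ weakly in $H^1(\Omega)$ and $v_j\to v$ strongly in $L^2(\Omega)$ by the Rellich theorem; the $C^0$ convergence of the volume forms gives $\int_\Omega v^2\, dV_{\tilde g}=1$, so $v\not\equiv 0$, while continuity of the trace operator forces $v|_\Sigma=0$. The scalar-curvature term is negligible in the limit: $\bigl|\int\tilde R_{\delta_j}v_j^2\bigr|\le\|\tilde R_{\delta_j}\|_{L^{3/2}}\|v_j\|^2_{L^6}\to 0$, using the uniform $L^6$ bound from the previous step. Finally, weak lower semicontinuity of the Dirichlet energy, together with the uniform convergence of the metric coefficients $\tilde g_{\delta_j}\to\tilde g$, gives
\begin{equation*}
8\int_\Omega|\nabla v|^2_{\tilde g}\, dV_{\tilde g}\ \le\ \liminf_{j\to\infty} 8\int_\Omega|\nabla v_j|^2_{\tilde g_{\delta_j}}\, dV_{\tilde g_{\delta_j}}\ =\ \liminf_{j\to\infty}\left(\lambda_1(\tilde L_{\delta_j}) - \int_\Omega\tilde R_{\delta_j}\, v_j^2\, dV_{\tilde g_{\delta_j}}\right)\ \le\ 0.
\end{equation*}
Thus $v$ is $\tilde g$-almost-everywhere constant on the connected manifold $\Omega$, and since $v|_\Sigma=0$ this forces $v\equiv 0$, contradicting $\int_\Omega v^2\, dV_{\tilde g}=1$. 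Therefore $\lambda_1(\tilde L_\delta)>0$ for all small $\delta$, and in particular $\tilde L_\delta$ has trivial kernel on the stated space of functions.

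The main obstacle I anticipate is the bookkeeping needed to extract uniform constants — for the Sobolev and Rellich theorems and for the lower semicontinuity of the energy with varying coefficients — from only $C^0$ control on the $\tilde g_\delta$; this is precisely where properties (1) and (2) enter, together with the fact that the limiting metric $\tilde g$, although merely Lipschitz, still supports these inequalities. By contrast, property (3) is used only through its weaker consequence $\|\tilde R_\delta\|_{L^{3/2}}\to 0$, which is exactly what neutralizes the (possibly negative) scalar curvature in the Rayleigh quotient. One could also recast the entire argument in terms of almost-minimizing sequences rather than genuine eigenfunctions, avoiding any reliance on elliptic regularity for the $C^2$ metrics $\tilde g_\delta$.
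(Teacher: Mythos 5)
Your proposal is correct, and its engine is the same as the paper's: test against $v$, integrate by parts, and control $\int \tilde R_\delta v^2$ by $\|\tilde R_\delta\|_{L^{3/2}}\,\|v\|_{L^6}^2$ via H\"older, then absorb this into the gradient term using a Sobolev inequality with a $\delta$-independent constant and the fact that $\|\tilde R_\delta\|_{L^{3/2}}\to 0$. The difference is in the packaging. The paper applies this estimate directly to a putative kernel element, using the Sobolev inequality in the form $\|v\|_{L^6}^2 \le c\,\|\nabla v\|_{L^2}^2$ with no lower-order term (valid because $v$ vanishes on the nonempty boundary portion $\Sigma$); the inequality $8\|\nabla v\|^2 \le c\,\|\tilde R_\delta^-\|_{L^{3/2}}\|\nabla v\|^2$ then forces $\nabla v\equiv 0$, hence $v\equiv 0$, in one line. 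You instead use the Sobolev inequality with the $\|v\|_{L^2}^2$ term included, which is why you need the additional apparatus of normalized first eigenfunctions, a uniform $H^1$ bound, Rellich compactness, and weak lower semicontinuity along a sequence $\delta_j\to 0$. That route is sound (and yields the marginally stronger conclusion $\lambda_1(\tilde L_\delta)>0$, which of course also follows from the paper's version since the same absorption argument bounds the Rayleigh quotient below by a positive multiple of the Poincar\'e constant), but the compactness step is avoidable: once you have $8\|\nabla v_j\|^2 \le C\|\tilde R_{\delta_j}\|_{L^{3/2}}(\|\nabla v_j\|^2+1)$, the Poincar\'e inequality for functions vanishing on $\Sigma$ already contradicts the normalization $\|v_j\|_{L^2}=1$ without passing to any limit. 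In short: same key mechanism, with your version trading the boundary Poincar\'e--Sobolev inequality for a weak-compactness argument.
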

\begin{proof}
Let $v$ belong to the kernel of $\tilde L_\delta$ with the above boundary conditions.  Multiplying $\tilde L_\delta v$
by $v$ and integrating by parts gives
$$0 = \int_\Omega \left(8|\nabla v|^2_{\tilde g_\delta} + \tilde R_{\delta} v^2\right) d\tilde V_{\delta}.$$
Let $\tilde R_\delta^- = -\min(\tilde R_\delta, 0)$, so that
\begin{align*}
\int_\Omega 8|\nabla v|^2_{\tilde g_\delta} d\tilde V_{\delta} &\leq \int_\Omega \tilde R_\delta^- v^2 d\tilde V_{\delta}\\
 &\leq \left(\int_\Omega  (\tilde R_\delta^-)^{3/2}  d\tilde V_{\delta}\right)^{2/3} \left(\int_\Omega  v^6  d\tilde V_{\delta}\right)^{1/3}\\
 &\leq c\left(\int_\Omega  (\tilde R_\delta^-)^{3/2}  d\tilde V_{\delta}\right)^{2/3} \left(\int_\Omega  |\nabla v|_{\tilde g_\delta}^2  d\tilde V_{\delta}\right),
\end{align*}
having used the H\"{o}lder and Sobolev inequalities (where $c>0$ is a constant).  Thus, for $\delta$ sufficiently
small, a nonzero $v$ may not exist, since the $L^{3/2}$ norm of $\tilde R_\delta^-$ converges to zero.
\end{proof}

Fix a smooth function $\rho>0$ on $\Omega$.  By the lemma and standard elliptic theory, for $\delta>0$
small there exists unique solution $u_\delta$ to the problem:
\begin{equation*}
 \begin{cases}
  \tilde L_\delta u_\delta = \delta \rho & \text{ in } \tilde \Omega,\\
  u_\delta = 1 & \text{ on } \Sigma,\\
  \partial_\nu (u_\delta) = 0 & \text{ on } \Sigma'.
 \end{cases}
\end{equation*}
A key fact is that $u_\delta$ converges to 1 in $C^0$  as $\delta \to 0^+$, and this convergence
is $C^2$ away from $S$ (see the proof of Proposition 4.1 of Miao \cite{miao}).

At this point, the proof follows nearly the same steps as in the smooth case, where we work with the metric $u_\delta^4 \tilde g_\delta$ (which has positive scalar curvature and induces the metric $\gamma$ on $\Sigma$).  We pick $\delta>0$ sufficiently small so that $\tilde H_\delta>H$
and $\Sigma'$ has positive mean curvature with respect to $u_\delta^4 \tilde g_\delta$.  Now, if necessary, perturb the $C^2$ metric $u_\delta^4 \tilde g_\delta$ on a neighborhood of $S$ to a $C^\infty$ metric,
preserving the above properties.  The proof now goes as in the smooth case, making use of Lemmas \ref{lemma_pos_mc} and \ref{lemma_positive}.
\end{proof}

We remark that our assumption of positive Gauss curvature of $(\Sigma, \gamma)$ is not necessary in Propositions \ref{prop_static}
and \ref{prop_positive}.

\subsection{Bartnik inner mass}
\label{sec_inner_mass}
One source of inspiration for the problem of considering valid fill-ins with minimal boundary is Bray's
definition of the Bartnik inner mass \cite{bray_RPI}, an example of a quasi-local mass (see section \ref{sec_ql_mass} for
more on quasi local mass).  The Bartnik inner mass aims to measure the size of the largest 
black hole  that could be placed inside a valid fill-in of given Bartnik data.
\begin{definition}
The \textbf{Bartnik inner mass} of Bartnik data $\Bcal$
is the real number
$$m_{inner}(\Bcal) = \sup_{(\Omega,g)} \left\{\sqrt{\frac{A}{16\pi}}\right\}$$
where the supremum is taken over the class of all valid fill-ins $(\Omega,g)$ of $\Bcal$, and $A$ is the minimum
area in the homology class of $\Sigma$ in $(\Omega,g)$.
\end{definition}
This definition, though formulated differently, is equivalent to Bray's.  The purpose of using the minimum
area in the homology class of $\Sigma$ is to ignore any large minimal surfaces ``hidden behind'' a smaller
minimal surface.

We observe that the sign of $m_{inner}(\Bcal)$ corresponds directly to the type of the Bartnik data $\Bcal$.  To see this,
first note that for fill-ins with a minimal boundary, the minimum area of $A$
in the homology class of $\Sigma$ in $(\Omega,g)$ is always attained by a smooth minimal surface, 
and so $A$ is positive (see Theorem \ref{gmt_lemma}).  For fill-ins without boundary, $\Sigma$ is 
homologically trivial, and so $A=0$.  Thus, $m_{inner}(\Bcal)$ is positive if $\Bcal$ is of positive type;
zero if $\Bcal$ is of zero type; and $-\infty$ if $\Bcal$ is of negative type.

\section{The interval of positivity}
\label{sec_interval}
The following idea was suggested by Bray: as a function of a parameter $\lambda > 0$, consider
the Bartnik data $(\Sigma, \gamma, \lambda H)$.  The main purpose of this section is to state and prove 
Theorem \ref{thm_interval}, which partially answers the question of how the type of the data
depends on $\lambda$.

One key ingredient is the following well-known theorem of Shi and Tam\footnote{We remark that the Shi--Tam theorem
was originally stated for the case in which every component of $\partial\Omega$ has positive Gauss and mean curvatures. 
However, one can allow additional minimal surface components (as we have done here) by observing
the positive mass theorem is true for manifolds with compact minimal boundary.  Alternatively, one could
employ a reflection argument to eliminate any minimal surface boundary components.}.
\begin{thm}[Shi--Tam, 2002 \cite{shi_tam}]
\label{thm_shi_tam}
If Bartnik data $(\Sigma, \gamma, H)$ has a valid fill-in $(\Omega, g)$, then
\begin{equation}
\int_\Sigma (H_0 - H) dA_\gamma \geq 0,
\label{eqn_shi_tam}
\end{equation}
where $H_0$ is the mean curvature of an isometric embedding of $(\Sigma, \gamma)$ into Euclidean space $\R^3$, and
$dA_\gamma$ is the area form on $\Sigma$ with respect to the metric $\gamma$.  Moreover, equality holds if and
only if $(\Omega,g)$ is isometric to a subdomain of $\R^3$.
\end{thm}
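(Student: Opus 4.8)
The plan is to deduce \eqref{eqn_shi_tam} from the positive mass theorem by attaching to the fill-in an asymptotically flat, scalar-flat exterior that realizes the Bartnik data along its inner boundary. Since $\gamma$ has positive Gauss curvature, the Weyl embedding theorem (Nirenberg, Pogorelov) furnishes an isometric embedding of $(\Sigma,\gamma)$ onto a closed convex surface $\Sigma_0\subset\R^3$, unique up to rigid motion; its mean curvature, pulled back to $\Sigma$, is the positive function $H_0$ appearing in the statement. Foliate the unbounded component of $\R^3\sm\Sigma_0$ by the outer parallel surfaces $\Sigma_r=\{x:\operatorname{dist}(x,\Sigma_0)=r\}$, $r\in[0,\infty)$, which are again convex; write $\sigma_r$ for the induced metric and $H_0(r)>0$ for the Euclidean mean curvature of $\Sigma_r$, so that $\sigma_0=\gamma$ and $H_0(0)=H_0$.

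Next I would build, on the cylinder $\Sigma\times[0,\infty)$, a \emph{quasi-spherical} metric of Bartnik's type
\[
g_+=u^2\,dr^2+\sigma_r,\qquad u>0,
\]
with $R(g_+)\equiv0$ and $u\to1$ at infinity. Writing out the scalar curvature of $g_+$ turns $R(g_+)=0$ into a semilinear parabolic equation for $u$ with $r$ as the time variable, schematically $H_0(r)\,\partial_r u=u^2\Delta_{\sigma_r}u+(\text{lower-order terms in }u)$; it is parabolic precisely because $H_0(r)>0$ along the convex foliation, and the maximum principle yields a global positive solution with $u\to1$ at infinity (this is Bartnik's quasi-spherical construction, carried out in detail by Shi and Tam \cite{shi_tam}). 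Prescribing the Cauchy data $u(\cdot,0)=H_0/H$ --- legitimate since $H,H_0>0$ --- makes the mean curvature of the inner slice $\Sigma\times\{0\}$ with respect to $g_+$, which equals $H_0(r)/u$ at $r=0$, exactly the prescribed function $H$. Gluing $(\Omega,g)$ to $(\Sigma\times[0,\infty),g_+)$ along $\Sigma$ then produces an asymptotically flat manifold $(M,\hat g)$, Lipschitz across $\Sigma$ and smooth elsewhere, with nonnegative scalar curvature away from $\Sigma$ and, as in a valid fill-in, possibly a compact minimal boundary inherited from $\partial\Omega\sm\Sigma$: the induced metrics agree (both $\gamma$) and the two mean curvatures of $\Sigma$ agree (both $H$, for compatibly oriented normals).

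Because the mean curvatures match from the two sides, $\hat g$ carries no negative distributional scalar curvature along $\Sigma$; one may either smooth $\hat g$ near $\Sigma$ via Miao \cite{miao} and conformally correct to genuinely nonnegative scalar curvature exactly as in the Lipschitz case of Proposition \ref{prop_positive}, or invoke a version of the positive mass theorem valid across such corners. The central analytic point of Shi and Tam is then the monotonicity in $r$ of the Brown--York-type quantity
\[
m(r)=\frac{1}{8\pi}\int_{\Sigma_r}\bigl(H_0(r)-H_{g_+}(\Sigma_r)\bigr)\,dA_{\sigma_r},
\]
proved by differentiating $m(r)$ and using the scalar-flatness equation for $u$, the Gauss equation on $\Sigma_r$, and Gauss--Bonnet, together with the identifications $m(0)=\frac{1}{8\pi}\int_\Sigma(H_0-H)\,dA_\gamma$ and $\lim_{r\to\infty}m(r)=m_{ADM}(M,\hat g)$ (a direct computation in asymptotically flat coordinates). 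Since $m(r)$ is non-increasing and the positive mass theorem \cite{schoen_yau} (valid also when $M$ has a compact minimal boundary) gives $m_{ADM}(M,\hat g)\geq0$, we conclude
\[
\frac{1}{8\pi}\int_\Sigma(H_0-H)\,dA_\gamma=m(0)\;\geq\;\lim_{r\to\infty}m(r)=m_{ADM}(M,\hat g)\;\geq\;0,
\]
which is \eqref{eqn_shi_tam}.

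For the rigidity statement, equality forces $m_{ADM}(M,\hat g)=0$, hence $m(r)\equiv0$; tracing the equality case of the monotonicity formula forces $u\equiv1$, so the exterior piece is exactly the Euclidean exterior of $\Sigma_0$, and then the rigidity case of the positive mass theorem applied to $(M,\hat g)$ shows that $(M,\hat g)$ is isometric to $\R^3$. In particular there is no minimal boundary, and $(\Omega,g)$ is isometric to the bounded domain enclosed by $\Sigma_0$, i.e., a subdomain of $\R^3$. I expect the main obstacle to be the quasi-spherical construction itself --- global existence and the sharp asymptotics for the parabolic lapse equation, and the differential identity underlying the monotonicity of $m(r)$ --- while the low regularity of $\hat g$ at $\Sigma$ is handled routinely by Miao's smoothing together with the conformal argument already employed in Proposition \ref{prop_positive}.
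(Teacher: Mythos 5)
The paper offers no proof of Theorem \ref{thm_shi_tam}: it is quoted from Shi and Tam \cite{shi_tam}, with a footnote explaining that the extra minimal boundary components permitted by a valid fill-in are handled by the positive mass theorem for manifolds with compact minimal boundary (or by a reflection argument). Your sketch is a correct and faithful outline of the original Shi--Tam argument --- Weyl embedding, the quasi-spherical scalar-flat collar solving Bartnik's parabolic lapse equation with initial data $u_0=H_0/H$, gluing with matched induced metrics and mean curvatures, monotonicity of the Brown--York quantity down to the ADM mass, and the positive mass theorem across the corner --- and you correctly flag the one point where the paper's statement goes beyond the original formulation, namely the possible minimal boundary $\partial\Omega\sm\Sigma$. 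The genuinely hard analytic steps (global existence and sharp asymptotics for the parabolic lapse equation, the differential identity behind the monotonicity, and the corner/low-regularity version of the positive mass theorem) are asserted rather than proved, which is appropriate since they constitute the content of \cite{shi_tam} and \cite{miao} rather than of this paper.
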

Recall that we assume $\gamma$ to have positive Gauss curvature, which is necessary for the theorem:
$H_0$ is well-defined, since an isometric embedding of a positive Gauss curvature surface into $\R^3$
exists and is unique up to rigid motions (see the references in \cite{shi_tam}).  

In our case, inequality (\ref{eqn_shi_tam}), which depends only on the Bartnik data, must be satisfied for data 
that admits a valid  fill-in.  In particular, by increasing $H$ (while keeping $\gamma$, and therefore $H_0$, fixed), it is clear 
that some Bartnik data do not possess fill-ins (i.e., are of negative type).  Hence, the Shi--Tam theorem
gives an obstruction to Bartnik data being of nonnegative type.

The following main theorem demonstrates that there exists a unique interval of values
of $\lambda$ for which this data $(\Sigma,\gamma,\lambda H)$ is of positive type.

\begin{thm}
Fix Bartnik data $(\Sigma, \gamma, H)$.  There exists a unique number $\lambda_0 > 0$ such that
$(\Sigma, \gamma, \lambda H)$ is of positive type if and only if $\lambda \in (0, \lambda_0)$.  Moreover,
$(\Sigma, \gamma, \lambda H)$ is of negative type if $\lambda > \lambda_0$.
\label{thm_interval}
\end{thm}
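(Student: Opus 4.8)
The plan is to prove that the set $\Lambda = \{\lambda > 0 : (\Sigma, \gamma, \lambda H) \text{ is of positive type}\}$ is a nonempty interval of the form $(0, \lambda_0)$, and that $(\Sigma, \gamma, \lambda H)$ is of negative type for $\lambda > \lambda_0$. I would organize this into three parts: (i) small $\lambda$ gives positive type; (ii) large $\lambda$ gives negative type; (iii) $\Lambda$ is an interval (``downward closed'' in $\lambda$), so a threshold $\lambda_0$ exists and separates the two behaviors.

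For part (i), I would show $(\Sigma, \gamma, \lambda H)$ is of positive type for all sufficiently small $\lambda > 0$ by exhibiting an explicit valid fill-in with a minimal boundary component. The natural candidate is a rotationally symmetric ``collar'' metric $g = \frac{ds^2}{1 - 2m/s} + s^2 \gamma_0$ in the Schwarzschild family (or a warped-product construction over $(\Sigma,\gamma)$ if $\gamma$ is not round), capped off by a minimal surface. More robustly: take an isometric embedding of $(\Sigma,\gamma)$ into a large Schwarzschild manifold of mass $m$ via the Shi--Tam construction; as the Schwarzschild mass parameter increases, the mean curvature of the corresponding leaf decreases toward zero, so for a suitable choice the mean curvature is $\lambda H$ with $\lambda$ small, and the region enclosed between that leaf and the horizon is a valid fill-in with minimal boundary. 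Alternatively, one can use Mantoulidis--Schoen-type collars. The key point to verify is that the mean curvature can be made as small as $\lambda H$ while keeping scalar curvature nonnegative, which the Schwarzschild collar handles cleanly since its scalar curvature is zero.

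For part (ii), I would invoke the Shi--Tam theorem (Theorem \ref{thm_shi_tam}): any valid fill-in of $(\Sigma, \gamma, \lambda H)$ forces $\int_\Sigma (H_0 - \lambda H)\, dA_\gamma \geq 0$, i.e. $\lambda \leq \frac{\int_\Sigma H_0\, dA_\gamma}{\int_\Sigma H\, dA_\gamma} =: \lambda_1$. Hence for $\lambda > \lambda_1$ the data is of negative type, giving an a priori upper bound. For part (iii), the crucial monotonicity claim is: if $(\Sigma, \gamma, \lambda H)$ admits a valid fill-in, then so does $(\Sigma, \gamma, \lambda' H)$ for every $\lambda' \in (0, \lambda)$, and moreover positive type is inherited. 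The mechanism is a conformal deformation increasing mean curvature downward or, more precisely, decreasing it: given a valid fill-in $(\Omega, g)$ of $(\Sigma,\gamma,\lambda H)$, I would solve a Dirichlet problem for a harmonic (or conformal-Laplacian) function $\varphi$ on $\Omega$ with $\varphi = 1$ on $\Sigma$ and $\varphi = 1 - \epsilon$ on the minimal boundary $S$ (if present), or use a one-parameter family, so that $\tilde g = \varphi^4 g$ is scalar-flat, keeps $\gamma$ fixed on $\Sigma$, keeps $S$ minimal, and has $\tilde H < H$ on $\Sigma$ by the sign of the normal derivative of $\varphi$ — exactly the construction appearing in the proof of Proposition \ref{prop_positive}. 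By scaling $\epsilon$ continuously one realizes every mean curvature $\lambda' H$ with $\lambda' \in (\lambda - \eta, \lambda]$; to reach all of $(0,\lambda)$ one iterates or takes $\epsilon \to 1$. Then set $\lambda_0 = \sup \Lambda$, which lies in $(0, \lambda_1]$ and is positive by part (i); downward-closedness gives $(0,\lambda_0) \subseteq \Lambda$, Shi--Tam rigidity or a direct argument gives $\lambda > \lambda_0 \Rightarrow$ negative type, and uniqueness of $\lambda_0$ is immediate.

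\textbf{Main obstacle.} The delicate step is part (iii): controlling the mean curvature on $\Sigma$ \emph{downward} by a definite, quantifiable amount while simultaneously preserving nonnegative scalar curvature, the induced metric $\gamma$, and minimality of $S$ — and covering the full range $\lambda' \in (0, \lambda)$ rather than just a neighborhood below $\lambda$. One must check that the conformal factor $\varphi$ (solving the mixed Dirichlet/Neumann or pure Dirichlet problem) indeed has the correct sign of normal derivative on $\Sigma$ for all admissible boundary values, via the Hopf lemma, and that iterating the construction (or passing to a limit) does not degenerate the fill-in. There is also the bookkeeping subtlety that strictly decreasing $H$ produces data of \emph{positive} type (not merely nonnegative), since the resulting fill-in, after a Corvino-type perturbation as in Proposition \ref{prop_positive}, can be arranged to have positive scalar curvature somewhere — this is what actually certifies membership in $\Lambda$ rather than just existence of some valid fill-in. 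Handling the possible non-smoothness across $S$ when doubling (the Lipschitz/Miao case, already anticipated in the proof of Proposition \ref{prop_positive}) is a technical but routine additional wrinkle.
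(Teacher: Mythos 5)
Your overall skeleton (small $\lambda$ admits a fill-in with minimal boundary; Shi--Tam bounds the admissible $\lambda$ from above; the set of good $\lambda$ is downward closed) matches the paper's Steps 1--3. But the mechanism you propose for the crucial monotonicity step (iii) does not work. With $\varphi$ harmonic, $\varphi=1$ on $\Sigma$ and $\varphi=1-\epsilon$ on $S$, the maximum principle forces $\varphi<1$ in the interior, so the \emph{outward} normal derivative of $\varphi$ on $\Sigma$ is positive and the conformal formula $\tilde H = \varphi^{-2}H+4\varphi^{-3}\partial_\nu\varphi$ \emph{increases} the mean curvature --- this is exactly the deformation the paper uses in Proposition \ref{prop_positive} to raise $H$, not lower it (and it also destroys minimality of $S$, since $\partial_\nu\varphi\neq 0$ there). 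More fundamentally, no conformal factor with $\varphi|_\Sigma=1$ (needed to preserve $\gamma$) can realize the target $\lambda' H$ pointwise, because that would additionally prescribe the Neumann data $\partial_\nu\varphi=\tfrac14(\lambda'-1)H$ on $\Sigma$, overdetermining the harmonic function. The paper's Lemma \ref{lemma_stretch} avoids all of this by a non-conformal collar deformation $\tilde g=\rho(t)^2dt^2+G_t$ with $\rho(0)=1/\lambda'$, which rescales the mean curvature by exactly $\lambda'$ pointwise in one step (no iteration), fixes $\gamma$ and everything outside the collar (so $S$ stays minimal), and --- via the second-variation identity (\ref{eq_second_variation}) --- does not decrease scalar curvature precisely because $K_t>0$ and $H_t>0$ near $\Sigma$. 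This is where the positive Gauss curvature hypothesis actually enters, and it is the step your plan is missing. (Appealing to Lemma \ref{lemma_positive} to bridge the gap is circular, since its proof invokes Step 2 of this very theorem.)

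A second genuine gap: the theorem asserts $I_+=(0,\lambda_0)$ is \emph{open} on the right, i.e.\ $(\Sigma,\gamma,\lambda_0H)$ is not of positive type. Setting $\lambda_0=\sup\Lambda$ and using downward closedness only yields $(0,\lambda_0)\subseteq\Lambda\subseteq(0,\lambda_0]$; your "uniqueness is immediate" does not exclude $\lambda_0\in\Lambda$. The paper's Step 4 handles this with a separate argument: if $\lambda_0\in I_+$, take a valid fill-in with positive scalar curvature somewhere (Proposition \ref{prop_positive}), solve $\Delta u=\tfrac18R_gu$ with $u=1$ on $\Sigma$ and $\partial_\nu u=0$ on the minimal boundary, and observe that $u^4g$ is scalar-flat with strictly larger boundary mean curvature $H'>\lambda_0H$; Lemma \ref{lemma_positive} then puts some $\beta\lambda_0>\lambda_0$ in $I_+$, a contradiction. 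You would need to supply this (or an equivalent) argument. A minor further point: in part (i), isometrically embedding an arbitrary positively curved $(\Sigma,\gamma)$ into a Schwarzschild manifold of prescribed large mass is not justified; the collar construction over $(\Sigma,\gamma)$ itself (as in the paper's Step 1, using that the product metric has positive scalar curvature because $K_\gamma>0$) is the robust route.
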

As a consequence, $(\Sigma, \gamma, \lambda H)$ is zero type for at most one value of $\lambda$, namely
$\lambda_0$.
\begin{proof}
Define
\begin{align*}
I_+ &= \{\lambda \in \R^+ : (\Sigma, \gamma, \lambda H) \text{ is of positive type}\},\\
I_0 &= \{\lambda \in \R^+ : (\Sigma, \gamma, \lambda H) \text{ is of zero type}\},\\
I_{\geq 0} &= I_+ \cup I_0.
\end{align*}

\emph{Step 1:  } We first show $I_+$ is nonempty.  Consider the space $\Omega = \Sigma \times [-1,0]$ with
product metric $g$, and identify $\Sigma$ with $\Sigma \times \{0\}$.  Let $S$ be the other boundary component
of $\Omega$, namely 
$\Sigma \times \{-1\}$.  Observe that 1) $\Omega$ has positive scalar curvature since $\Sigma$ has
positive Gauss curvature, and 2) all leaves $\Sigma \times \{t\}$ are minimal surfaces.

Choose a smooth function $v$ on $\Omega$ satisfying the following properties: $v \leq 0$, $v$
vanishes on $\Sigma$ and in a neighborhood of $S$, and $\partial_{\nu}v = \frac{1}{4} H$ on $\Sigma$.
For $\epsilon > 0$, let $u_\epsilon = 1 + \epsilon v$.  In particular,
$u_\epsilon$ is positive for $\epsilon>0$ sufficiently small.  Consider the conformal metric $g_\epsilon
=u_\epsilon^4 g$.  Note that $g_\epsilon$ induces the metric $\gamma$ on $\Sigma$, and assigns
the following value to the mean curvature of $\Sigma$:
$$H_\epsilon = 4\partial_\nu(u_\epsilon)=4\epsilon\partial_\nu v=\epsilon H,$$
by our choice of $v$.  Moreover, the scalar curvature of $g_\epsilon$ is
\begin{align*}
R_{g_\epsilon} &= u_{\epsilon}^{-5} \left(-8\Delta_g u_\epsilon + R_g u_\epsilon\right)\\
	&= u_{\epsilon}^{-5} \left(-8\epsilon \Delta_g v + R_g u_\epsilon\right),
\end{align*}
which is positive for $\epsilon$ sufficiently small, 
since $R_g > 0$ and $u_\epsilon$ is uniformly
bounded below as $\epsilon \to 0^+$.  Fix such an $\epsilon$. 
We can see $(\Omega, g_\epsilon)$ is a valid fill-in of $(\Sigma, \gamma, \epsilon H)$,
since this fill-in has positive scalar curvature, induces the correct boundary geometry on $\Sigma$,
and $S$ is minimal (since $g_\epsilon=g$ near $S$). In particular, $\epsilon$ belongs
to $I_+$, so $I_+ \neq \emptyset$.

\emph{Step 2:  } The next step is to show that $I_{\geq 0}$ is connected, and $I_0$ contains at most one point.
To accomplish this, we show that for every number in $I_{\geq 0}$, every smaller
positive number belongs to $I_+$.  It suffices to show that if $(\Sigma, \gamma, H)$ is of nonnegative type,
then $(\Sigma, \gamma, \lambda H)$ is of positive type for all $\lambda \in (0,1)$.  This fact follows from
the next lemma, by  Proposition \ref{prop_positive}.
\begin{lemma}
\label{lemma_stretch}
Let $(\Omega,g)$ be a fill-in of arbitrary Bartnik data $(\Sigma,\gamma,H)$.  Fix $\lambda \in (0,1)$ and a neighborhood $U$ of $\Sigma$ in $\Omega$.  There exists
a metric $\tilde g$ on $\Omega$ such that:
\begin{enumerate}[(a)]
\item $\tilde g$ is a fill-in of $(\Sigma, \gamma, \lambda H)$,
\item $\tilde g \geq g$, with equality outside $U$, and
\item $R_{\tilde g} \geq \min(0, R_g)$ pointwise, with strict inequality on a neighborhood of $\Sigma$, where $R$ and $R_{\tilde g}$
are the scalar curvature of $g$ and $\tilde g$.
\end{enumerate}
In particular, if $(\Omega, g)$ is a valid fill-in, so is $(\Omega, \tilde g)$.
\end{lemma}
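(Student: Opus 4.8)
The plan is to leave $g$ untouched away from $\Sigma$ and to lower the mean curvature of $\Sigma$ from $H$ to $\lambda H$ by ``stretching'' a thin collar of $\Sigma$ in the direction normal to $\Sigma$. Stretching (rather than shrinking) makes the new metric automatically dominate $g$, which is exactly condition (b); the normal stretch lowers the mean curvature precisely because $\lambda<1$; and the positivity of the Gauss curvature of $\gamma$ together with $H>0$ will deliver the scalar curvature bound.

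Concretely, I would work in Gaussian normal coordinates for $g$ near $\Sigma$, writing $g=dr^2+g_r$ on $\Sigma\times[0,\delta_1]$ with $r$ the $g$-distance to $\Sigma$. Write $H_r^+=\partial_r\log\sqrt{\det g_r}$ for the mean curvature of the slice $\Sigma_r=\Sigma\times\{r\}$ with respect to $+\partial_r$; note $H_0^+=-H<0$, while $R_{g_r}=2K_{g_r}$ with $R_{g_0}=2K_\gamma>0$. By compactness of $\Sigma$, after shrinking we may assume that on the collar $\Sigma\times[0,\delta]\subseteq U$ we have $H_r^+<0$ and $R_{g_r}>0$. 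Choose a smooth, non-increasing $\phi\colon[0,\delta]\to[1,1/\lambda]$ with $\phi\equiv1/\lambda$ near $r=0$ and $\phi\equiv1$ near $r=\delta$ (to infinite order), and define $\tilde g=\phi(r)^2\,dr^2+g_r$ on the collar and $\tilde g=g$ elsewhere. This is a smooth metric on $\Omega$, and $(\Omega,\tilde g)$ is still a compact connected Riemannian manifold with boundary having $\Sigma$ as a component.

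Verifying (a) and (b) is short. Each slice keeps its induced metric $g_r$, so $\tilde g$ induces $\gamma$ on $\Sigma$; the $\tilde g$-unit normal of $\Sigma_r$ is $\phi^{-1}\partial_r$, so its second fundamental form is $\phi^{-1}$ times the one for $g$, giving the mean curvature of $\Sigma$ with respect to the outward normal equal to $-\phi(0)^{-1}H_0^+=\lambda H$ (recall $\phi(0)=1/\lambda$ and $H_0^+=-H$); thus $\tilde g$ is a fill-in of $(\Sigma,\gamma,\lambda H)$. And $\tilde g-g=(\phi^2-1)\,dr^2\geq0$ since $\phi\geq1$, with equality exactly where $\phi=1$, i.e.\ outside the collar, hence outside $U$.

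The heart of the matter is (c). Passing to $g$-arclength on the collar and applying the standard formula $R_{dt^2+h_t}=R_{h_t}-2\partial_t H_t^+-(H_t^+)^2-|A_t|^2$ to both $g$ and $\tilde g$, I expect the identity
\[
R_{\tilde g}=(1-\phi^{-2})\,R_{g_r}+\phi^{-2}\,R_g+2\phi^{-3}H_r^+\,\phi'.
\]
On the collar: $\phi\geq1$ gives $1-\phi^{-2}\geq0$, so $(1-\phi^{-2})R_{g_r}\geq0$ using $R_{g_r}>0$, and this is strictly positive near $\Sigma$ where $\phi>1$; $0<\phi^{-2}\leq1$ gives $\phi^{-2}R_g\geq\min(0,R_g)$; and $H_r^+<0$ with $\phi'\leq0$ gives $2\phi^{-3}H_r^+\phi'\geq0$. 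Hence $R_{\tilde g}\geq\min(0,R_g)$ on the collar, with strict inequality on a neighborhood of $\Sigma$, while $R_{\tilde g}=R_g$ off the collar; this is (c). The final assertion is then immediate: if $(\Omega,g)$ is valid then $R_g\geq0$, so $R_{\tilde g}\geq\min(0,R_g)=0$, and $\tilde g=g$ near $\partial\Omega\sm\Sigma$, so that boundary is still minimal. The one genuinely delicate point is pinning down the displayed identity with the correct signs and coefficients and seeing that the extra term $2\phi^{-3}H_r^+\phi'$ is nonnegative throughout the collar — which is exactly what forces the collar to be taken thin enough that $H_r^+<0$ on it, and why positivity of $H$ (and, for the first two terms, of $K_\gamma$) is used; the remainder is bookkeeping.
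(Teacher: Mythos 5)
Your proof is correct and is essentially the paper's own argument: both stretch the normal direction on a collar of $\Sigma$ by a factor interpolating between $1/\lambda$ at the boundary and $1$ in the interior, and both rest on the identity $R_{\tilde g}=\phi^{-2}R_g+2K_r(1-\phi^{-2})+2\phi^{-3}\phi' H_r^{+}$, whose three terms are controlled exactly as you describe (the paper uses the outward normal, so its signs read $\rho'\geq 0$, $H_t>0$, but the computation is the same). The only cosmetic difference is that the paper asks for $\rho'(0)>0$ rather than $\phi'\equiv 0$ near $r=0$; your version still gives the required strict inequality near $\Sigma$ via the $(1-\phi^{-2})R_{g_r}$ term, so nothing is lost.
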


\begin{proof} Working in a neighborhood of $\Sigma$ in $\Omega$ diffeomorphic to $(-t_0,0] \times \Sigma$ and contained in $U$, we may assume $g$ takes the form
$$g = dt^2 + G_t,$$
where $t$ is the negative of $g$-distance to $\Sigma$, and $G_t$ is a Riemannian metric on the surface $\Sigma_t = \Sigma \times \{t\}$.  Shrinking $t_0$ if necessary, we may assume that every $(\Sigma_t,G_t)$ has positive Gauss curvature $K_t$
and positive mean curvature $H_t$ (in the outward direction $\partial_t$).  Let $\rho:(-t_0,0] \to \R$ be a smooth
function satisfying
\begin{enumerate}
 \item $\rho \equiv 1$ in a neighborhood of $-t_0$, 
 \item $\rho(0) =\lambda\inv > 1$,
 \item $\rho'(t) \geq 0$, and
 \item $\rho'(0) > 0$.
\end{enumerate}
Define a new metric $\tilde g$ on $\Omega$ by setting
\begin{equation}
\label{eqn_warped_metric}
\tilde g = \rho(t)^2 dt^2 + G_t
\end{equation}
on the neighborhood of $\Sigma$, and extending smoothly by $g$ to the rest of $\Omega$; claim (b) is satisfied.  A straightforward
calculation shows that $\Sigma$ has mean curvature $\lambda H$ in the metric $\tilde g$; moreover $\tilde g$
induces the metric $\gamma$ on $\Sigma$, so claim (a) holds.  
Last, we must study the scalar curvature of $\tilde g$ on the neighborhood $(-t_0,0] \times \Sigma$.
The following well-known formula, obtained from computing
the variation of mean curvature under a unit normal flow, 
 gives the
scalar curvature of $g$ as:
\begin{equation}
\label{eq_second_variation}
R_g = -2 \frac{\partial H_t}{\partial t} + 2K_t - H_t^2 - \|h_t\|^2,
\end{equation}
where $h_t$ is the second fundamental form of $\Sigma_t$ in $(\Omega, g)$, and its norm $\|\cdot\|^2$ is
taken with respect to $G_t$.  Applying this formula to the metric $\tilde g$ yields
\begin{equation}
\label{eqn_R_tilde_g}
 R_{\tilde g} = \frac{1}{\rho(t)^2} R_g + 2K_t(1 - \rho(t)^{-2}) + 2\frac{\rho'(t)}{\rho(t)^3}H_t.
\end{equation}
Now, $K_t > 0, \rho(t) \geq 1, \rho'(t) \geq 0$
and $H_t > 0$, so we see $R_{\tilde g}(x) \geq 0$ if $R_g(x) \geq 0$ and $R_{\tilde g}(x) \geq R_g(x)$
if $R_g(x) < 0$; both are strict inequalities near $t=0$, proving claim (c).
\end{proof}
We conclude that $I_{\geq 0}$ is a convex subset of $\R^+$, containing all arbitrarily small positive numbers.
Moreover, $I_0$ contains at most a single point.

\emph{Step 3:  } We prove that $I_{\geq 0}$ is bounded above. This follows immediately from the work of Shi and Tam.
More precisely, if $\lambda \in I_{\geq 0}$, then $$\lambda \leq \frac{\int_\Sigma H_0 dA_\gamma}{\int_\Sigma H dA_\gamma}.$$
Together with step 2, we see $I_{\geq 0}$ and $I_+$ are intervals of the form $(0, \lambda_0]$ or $(0,\lambda_0)$.

\emph{Step 4:  } Here we prove that $\lambda_0$ does not belong to $I_+$.  If $\lambda_0 \in I_+$, then by 
Proposition \ref{prop_positive}, there exists
a valid fill-in $(\Omega,g)$ of $(\Sigma, \gamma, \lambda_0 H)$ with positive scalar curvature at some point
and boundary $\Sigma \dot \cup S_0$, with $S_0$ minimal and nonempty. 
Solve the mixed Dirichlet--Neumann problem:
\begin{equation}
\begin{cases}
 \Delta u = \frac{1}{8} R_g u & \text{ in } \Omega,\\
 u = 1 & \text{ on } \Sigma,\\
 \partial_\nu(u) = 0 & \text{ on } S_0.
\end{cases}
\end{equation}
Here, $\nu$ is the unit normal, always chosen to point out of $\Omega$.  Note that a solution
exists because $R_g \geq 0$. 
By the maximum principle, $u>0$ in $\Omega$ and $\partial_{\nu}(u) > 0$ on $\Sigma$.  Let $g'=u^4 g$.  Note that $g'$ has zero scalar curvature, induces the metric $\gamma$ on $\Sigma$
and assigns zero mean curvature to $S_0$.  In particular, if we let $H'$ be the mean curvature
of $\Sigma$ with respect to $g'$, then $(\Sigma, \gamma, H')$ has a valid fill-in with minimal boundary, namely $(\Omega, g')$,
and is therefore of positive type.  Observe that $H'> \lambda_0 H$.  Choose $\beta > 1$ so that 
$H' > \beta\lambda_0 H$.  By Lemma \ref{lemma_positive} in appendix \ref{appendix_bmn}, we see that $(\Sigma, \gamma, \beta \lambda_0 H)$ 
is of positive type. Therefore $\beta \lambda_0 \in I_+$, which contradicts $\lambda_0 = \sup I_+$.
We conclude $I_+ = (0, \lambda_0)$, and either $I_{\geq 0} = (0,\lambda_0)$ or $(0,\lambda_0]$.
It follows that if $\lambda>\lambda_0$, then $(\Sigma, \gamma, \lambda H)$ must be of negative type.
\end{proof}

To emphasize the picture, the data $(\Sigma, \gamma, \lambda H)$ is of positive type for $\lambda$ small.
As we increase $\lambda$, this behavior persists until $\lambda = \lambda_0$.  At this point, the data
is zero or negative, and for $\lambda > \lambda_0$, the data is negative.  See section \ref{sec_open_problems}
for further discussion of the behavior near $\lambda=\lambda_0$.

\subsection{Inner mass function}
\label{sec_inner_mass_function}
In the remainder of this section we will study 
the function
\begin{equation}
m(\lambda) = m_{inner}(\Sigma, \gamma, \lambda H)
 \label{eqn_m_lambda}
\end{equation}
defined for $\lambda \in (0, \lambda_0)$.  Intuitively, one would
expect the following behavior of the function $m(\lambda)$. 
For $\lambda$ small, the mean curvature $\lambda H$
is close to zero, so one might anticipate the existence of a valid fill-in
with minimal boundary of approximately the same area as $\Sigma$.

As $\lambda$ increases, one would expect the class of valid fill-ins to shrink; one
reason is that the Shi--Tam inequality is more difficult to satisfy.  Consequently, the Bartnik inner mass 
ought to decrease as well.  The following statement supports this intuition.
\begin{prop}
Given Bartnik data $(\Sigma, \gamma, H)$, the function $m:(0, \lambda_0) \to \R^+$ is continuous and decreasing,
with the following limiting behavior:
$$\lim_{\lambda \to 0^+} m(\lambda) = \sqrt{\frac{|\Sigma|_\gamma}{16\pi}}.$$
Here, $|\Sigma|_\gamma$ is the area of $\Sigma$ with respect to $\gamma$.
\label{prop_m_lambda}
\end{prop}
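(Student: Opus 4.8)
The plan is to prove the three claims—monotonicity, the limit as $\lambda\to0^+$, and continuity—with Lemma~\ref{lemma_stretch} carrying the first two and an additional conformal construction carrying the last. Throughout, if $(\Omega,g)$ is a valid fill-in of some Bartnik data I write $A(\Omega,g)$ for the minimum area in the homology class of $\Sigma$, so $m_{inner}$ is the supremum of $\sqrt{A(\Omega,g)/16\pi}$ over valid fill-ins.

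\emph{Monotonicity.} I would show $m$ is non-increasing. Fix $0<\lambda_1<\lambda_2<\lambda_0$ and a valid fill-in $(\Omega,g)$ of $(\Sigma,\gamma,\lambda_2 H)$. Applying Lemma~\ref{lemma_stretch} to the data $(\Sigma,\gamma,\lambda_2 H)$ with stretch factor $\lambda_1/\lambda_2\in(0,1)$ gives a valid fill-in $(\Omega,\tilde g)$ of $(\Sigma,\gamma,\lambda_1 H)$ with $\tilde g\geq g$. Since $\tilde g\geq g$ as quadratic forms, the induced area density of every $2$-surface only increases, so $|S'|_{\tilde g}\geq|S'|_g$ for all $S'$; taking the infimum over $S'$ in the homology class of $\Sigma$ gives $A(\Omega,\tilde g)\geq A(\Omega,g)$, and then the supremum over all valid fill-ins of $(\Sigma,\gamma,\lambda_2 H)$ gives $m(\lambda_1)\geq m(\lambda_2)$.

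\emph{The limit.} For every $\lambda$, the surface $\iota(\Sigma)\subset\partial\Omega$ represents the homology class of $\Sigma$ and has area $|\Sigma|_\gamma$, so $A(\Omega,g)\leq|\Sigma|_\gamma$ in any valid fill-in and hence $m(\lambda)\leq\sqrt{|\Sigma|_\gamma/16\pi}$. For the matching lower bound I would reuse the fill-ins from Step~1 of the proof of Theorem~\ref{thm_interval}: with $v$ fixed as there and $g_{\mathrm{prod}}$ the product metric on $\Omega=\Sigma\times[-1,0]$, the metric $g_\lambda=(1+\lambda v)^4 g_{\mathrm{prod}}$ is, for all sufficiently small $\lambda>0$, a valid fill-in of $(\Sigma,\gamma,\lambda H)$. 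The key point is that $g_{\mathrm{prod}}$ calibrates $\Sigma$: the projection $\pi:\Sigma\times[-1,0]\to\Sigma$ is area-nonincreasing for $g_{\mathrm{prod}}$ and restricts to a degree-one map on any surface $S'$ generating $H_2(\Sigma\times[-1,0])\cong\mathbb{Z}$, so $|S'|_{g_{\mathrm{prod}}}\geq\left|\int_{S'}\pi^*dA_\gamma\right|=|\Sigma|_\gamma$. Since $1+\lambda v\geq1-\lambda\|v\|_\infty$, conformal scaling of areas gives $A(\Omega,g_\lambda)\geq(1-\lambda\|v\|_\infty)^4|\Sigma|_\gamma$, so $m(\lambda)\geq(1-\lambda\|v\|_\infty)^2\sqrt{|\Sigma|_\gamma/16\pi}$; letting $\lambda\to0^+$ and combining with the upper bound gives the stated limit.

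\emph{Continuity.} Monotonicity gives one-sided limits with $m(\lambda^-)\geq m(\lambda)\geq m(\lambda^+)$, so continuity is just the absence of jumps, and both halves reduce to one construction: \emph{given a valid fill-in of $(\Sigma,\gamma,aH)$ of minimal area $A$ and $b>a$, produce, for $b-a$ small, a valid fill-in of $(\Sigma,\gamma,bH)$ of minimal area at least $(1-\omega(b-a))\,A$, where $\omega(s)\to0$ as $s\to0^+$.} To build it, first apply Lemma~\ref{lemma_stretch} with a factor slightly below $1$ to pass to a valid fill-in $(\Omega,g')$ of $(\Sigma,\gamma,a'H)$ with $a'<a$, $A(\Omega,g')\geq A$, and $R_{g'}\geq R_1>0$ on a collar $V$ of $\Sigma$—the positivity being the gain in equation~(\ref{eqn_R_tilde_g}), which comes from $K_\gamma>0$. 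Then set $\tilde g=\varphi^4 g'$ with $\varphi\equiv1$ outside $V$ and $\varphi=1+\tfrac{b-a'}{4}H\,\chi(t)$ on $V$ for a cutoff $\chi$ with $\chi(0)=0$ and $\chi'(0)=1$: this keeps the induced metric on $\Sigma$ equal to $\gamma$, raises its mean curvature from $a'H$ to $a'H+(b-a')H=bH$, leaves the minimal boundary minimal, and has $R_{\tilde g}\geq0$ once $b-a'$ is small, because $\|\Delta_{g'}\varphi\|_\infty=O(b-a')$ on $V$ while $R_{g'}\geq R_1$ there. As $\varphi$ lies within $O(b-a')$ of $1$, minimal areas change by a factor $1-O(b-a')$; taking $a'$ close to $a$ completes the construction. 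Right-continuity at $\lambda^*$ now follows by applying this to one near-optimal valid fill-in of $(\Sigma,\gamma,\lambda^*H)$: for each $\eta>0$ we get $m(\lambda)\geq(1-\omega(\lambda-\lambda^*))(m(\lambda^*)-\eta)$ for $\lambda$ just above $\lambda^*$, so $\liminf_{\lambda\to(\lambda^*)^+}m(\lambda)\geq m(\lambda^*)$, and the reverse inequality is monotonicity. Left-continuity is formally symmetric: if it failed, then $m(\lambda)\geq m(\lambda^*)+\eta$ for all $\lambda<\lambda^*$, and applying the construction to near-optimal fill-ins of $(\Sigma,\gamma,\lambda H)$ with $\lambda\uparrow\lambda^*$ would yield valid fill-ins of $(\Sigma,\gamma,\lambda^*H)$ of minimal area exceeding $16\pi m(\lambda^*)^2$, a contradiction. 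The main obstacle lies precisely here: since the comparison fill-ins vary with $\lambda$, one must know that $\omega(\lambda^*-\lambda)\to0$ \emph{uniformly}—i.e., that $R_1$ and the width of $V$ do not degenerate as $\lambda\uparrow\lambda^*$. I would attempt to control these in terms of only $\min_\Sigma K_\gamma$, $\|H\|_{C^2}$, and $\lambda^*$ after first normalizing the collars of the comparison fill-ins via Lemma~\ref{lemma_stretch}; pinning this down is the delicate part.
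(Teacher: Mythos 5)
Your overall strategy coincides with the paper's: monotonicity comes from Lemma \ref{lemma_stretch} exactly as you describe; the limit as $\lambda\to 0^+$ combines the Step-1 fill-ins with the trivial upper bound $A\le|\Sigma|_\gamma$; and continuity is obtained by perturbing a near-optimal fill-in of $(\Sigma,\gamma,\lambda^*H)$ into fill-ins of $(\Sigma,\gamma,\lambda H)$ for $\lambda$ slightly larger, with small loss of minimal area. Two ingredients differ in a worthwhile way. First, for the lower bound in the limit you supply a calibration argument (the projection is area-nonincreasing for the product metric and has degree one on any competitor homologous to $\Sigma$), whereas the paper simply asserts that the minimal area in the homology class for the product metric is $|\Sigma|_\gamma$; your justification is correct and makes that step self-contained. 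Second, for the continuity perturbation the paper first invokes Proposition \ref{prop_positive} to replace the near-optimal fill-in by one with strictly positive scalar curvature everywhere, and then applies the warped deformation (\ref{eqn_warped_metric}) with $\rho(0)<1$, using $C^0$-closeness to control areas; you instead localize the positivity to a collar via Lemma \ref{lemma_stretch} (exploiting the gain in (\ref{eqn_R_tilde_g})) and raise the mean curvature back up by a conformal factor supported in the collar. Both mechanisms work for right-continuity, where the comparison fill-in is fixed, and the quantifier structure is identical. (One small normalization to watch in your conformal step: after stretching, the unit normal is $\rho(0)^{-1}\partial_t$, so the coefficient in $\varphi$ must be adjusted by $\rho(0)$ to land exactly on $bH$.)

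The issue you flag at the end is genuine, and you should know that the paper does not resolve it either. The published argument establishes, verbatim, only that for each $\lambda_1$ and $\epsilon>0$ one has $m(\lambda_1)<\epsilon+m(\lambda)$ for $\lambda>\lambda_1$ sufficiently close to $\lambda_1$ --- that is, right-continuity --- and then appeals to monotonicity. But a decreasing, everywhere right-continuous function need not be continuous, and upgrading the argument to left-continuity at $\lambda^*$ requires precisely the uniformity you identify: as $\lambda\uparrow\lambda^*$ the comparison fill-ins change, and neither the collar width nor the scalar-curvature margin (nor, in the paper's version, the modulus of positivity of the scalar curvature produced by Proposition \ref{prop_positive} and the rate of $C^0$-convergence of $\tilde g_\lambda$) is a priori controlled by the boundary data alone. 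So your proposal reproduces the paper's proof up to and including its weakest point, and is more candid about where that point lies; if you can carry out the uniform collar normalization sketched in your last sentence, you would be strengthening the published argument rather than merely matching it.
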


\begin{proof}
\emph{Monotonicity:  }
Given $0 < \lambda_1 < \lambda_2 < \lambda_0$, we showed in Lemma \ref{lemma_stretch} 
that any valid fill-in of $(\Sigma, \gamma, \lambda_2 H)$ gives rise to a valid fill-in of 
$(\Sigma, \gamma, \lambda_1 H)$ with a metric that is pointwise at least as large (see (\ref{eqn_warped_metric})).  
From the definition of the Bartnik inner mass, this shows that
$$m(\lambda_1) \geq m(\lambda_2).$$

\emph{Continuity:  } Suppose $0 < \lambda_1 < \lambda_0$, and let $\epsilon > 0$.  From
the definition of the Bartnik inner mass, there exists a valid fill-in $(\Omega,g)$ of
$(\Sigma, \gamma, \lambda_1 H)$ whose minimum area $A$ in the homology class of $\Sigma$
satisfies
$$m(\lambda_1) - \sqrt{\frac{A}{16\pi}} < \frac{\epsilon}{3}.$$
From Proposition \ref{prop_positive}, there exists a valid fill-in $(\tilde \Omega,\tilde g)$ of $(\Sigma, \gamma, \lambda_1 H)$ that has strictly positive scalar curvature, and whose minimum area $\tilde A$ in the homology class of $\Sigma$ is close to $A$:
$$\sqrt{\frac{A}{16\pi}} - \sqrt{\frac{\tilde A}{16\pi}} < \frac{\epsilon}{3}.$$
Now, for $\lambda_0>\lambda>\lambda_1$, $(\tilde \Omega, \tilde g)$
can be perturbed to a fill-in $(\tilde \Omega, \tilde g_\lambda)$ of $(\Sigma, \gamma, \lambda H)$
using a metric of the form $(\ref{eqn_warped_metric})$.  The scalar curvature of $\tilde g_\lambda$ has  
potentially decreased relative to that of $\tilde g$, but remains positive for $\lambda > \lambda_1$ sufficiently close to $\lambda_1$.  Since $\tilde g_\lambda \to \tilde g$ in $C^0$, we may assume $\lambda - \lambda_1$ is small enough so that
$$\sqrt{\frac{\tilde A}{16\pi}} - \sqrt{\frac{\tilde A_\lambda}{16\pi}} < \frac{\epsilon}{3},$$
where $\tilde A_\lambda$ is the minimum $\tilde g_\lambda$-area in the homology class of 
$\Sigma$.  Adding the last three inequalities and using the definition of the Bartnik inner mass gives
\begin{align*}
m(\lambda_1) &< \epsilon + \sqrt{\frac{\tilde A_\lambda}{16\pi}}\\ 
&\leq \epsilon + m(\lambda)
\end{align*}
for $\lambda - \lambda_1$ sufficiently small.  Together with the fact that $m(\cdot)$ is decreasing,
we have shown $m(\cdot)$ is continuous at $\lambda_1$.

\emph{Lower limit behavior:  }
To study the behavior of $m(\epsilon)$ for $\epsilon$ small,
recall that in Step 1 of the proof of Theorem \ref{thm_interval} we constructed a valid fill-in of $(\Sigma, \gamma,
\epsilon H)$ by a metric $g_\epsilon$ uniformly close (controlled by $\epsilon$) to a cylindrical product metric $g$
over $(\Sigma, \gamma)$.  As $\epsilon \to 0^+$, the minimum $g_\epsilon$ area in the homology class of $\Sigma$
converges to the minimum $g$-area in the same homology class, which is $|\Sigma|_\gamma$.  On the
other hand, the Bartnik inner mass of $(\Sigma, \gamma, H')$ (for any $H'$) never exceeds 
$\sqrt{\frac{|\Sigma|_\gamma}{16\pi}}$ by definition.
This proves
$$\lim_{\lambda \to 0+} m(\lambda) = \sqrt{\frac{|\Sigma|_\gamma}{16\pi}}.$$
\end{proof}
In section \ref{sec_open_problems} we conjecture that $m(\lambda)$ limits to zero as $\lambda \to \lambda_0^-$, 
behavior supported by the explicit computation of $m(\lambda)$ in a spherically-symmetric case in section 
\ref{sec_example_m_lambda}.

\section{Quasi-local mass}
\label{sec_ql_mass}
Recall from the introduction the problem of assigning a ``quasi-local mass'' to a bounded region $\Omega$
in a totally geodesic spacelike slice $(M,g)$ of a spacetime.  By most definitions, the quasi-local
mass of $\Omega$ depends only on the Bartnik data $(\Sigma, \gamma, H)$ of the boundary, and we adopt this
perspective here.  That is, we define a \emph{quasi-local mass functional} to be a map from 
(a subspace of) the set of Bartnik data to the real numbers. We refer the reader to \cite{sza} for a recent comprehensive survey of 
quasi-local mass.

We begin by recalling some well-known examples of quasi-local mass.  First, 
the \emph{Hawking mass} of $(\Sigma, \gamma, H)$ is defined to be
$$m_{H}(\Sigma, \gamma, H) = \sqrt{\frac{|\Sigma|_\gamma}{16\pi}}\left(1 -\frac{1}{16\pi} \int_\Sigma H^2 dA_\gamma\right).$$
There is no correlation between the sign of the Bartnik data and the sign of the Hawking mass.  That is,
the Hawking mass can be negative for positive Bartnik data, and vice versa (see section \ref{sec_examples}).

Next, the \emph{Brown--York mass} is defined for Bartnik data $(\Sigma, \gamma, H)$ (assuming as we do that $K_\gamma>0$ and $H>0$) by
\begin{equation*}
m_{BY}(\Sigma, \gamma, H) = \frac{1}{8\pi} \int_\Sigma (H_0 - H) dA_\gamma,
\end{equation*}
where $H_0$ is the mean curvature of an isometric embedding of $(\Sigma, \gamma)$ into $\R^3$.  
Theorem \ref{thm_shi_tam} of Shi--Tam establishes that the Brown--York mass is nonnegative for 
Bartnik data of nonnegative type.  However, there exist Bartnik data of both negative and zero type for which the 
Brown--York mass is strictly positive (see section \ref{sec_examples}).

A third example is the Bartnik inner mass, defined in section \ref{sec_inner_mass}.

A key observation is that Theorem \ref{thm_interval} canonically associates to any Bartnik
data (with $H>0$ and $K_\gamma >0$) a positive number $\lambda_0$, which we call the \emph{critical parameter}.  
In this section we use $\lambda_0$ to construct a new example
of a quasi-local mass functional.  

To motivate this definition, we will compute the number $\lambda_0$ for concentric
round spheres $\Sigma_r$ in the Schwarzschild manifold of mass $m$, with induced metric $\gamma_r$ and mean 
curvature $H_r$.  For our purposes the Schwarzschild manifold of mass $m$ is $\R^3$ minus the open Euclidean
ball of radius $m/2$, where $m >0$, equipped with the metric
\begin{equation}
\label{eqn_schwarz}
g = \left(1+ \frac{m}{2r}\right)^4 \delta,
\end{equation}
where $\delta$ is the Euclidean metric.  Note that $g$ is scalar-flat and its boundary is a minimal 2-sphere,
called the \emph{horizon}.

Straightforward computations show that $(\Sigma_r, \gamma_r)$ is a round sphere
of area $4\pi r^2\left(1+\frac{m}{2r}\right)^4$, and
$$H_r = \frac{2}{r} \left(1+\frac{m}{2r}\right)^{-2} - \frac{2m}{r^2} \left(1+\frac{m}{2r}\right)^{-3},$$
having used (\ref{eq_conf_mean_curv}).  
The mean curvature $H_r^0$ of $(\Sigma_r, \gamma_r)$ embedded in $\R^3$ is
$$H_r^0 = \frac{2}{r} \left(1+\frac{m}{2r}\right)^{-2}.$$
Therefore, if we let $\lambda_r = H_r^0/H_r$, then $(\Sigma_r, \gamma_r, \lambda_r H_r)$ admits a valid
fill-in -- namely a closed ball in flat-space of boundary area $4\pi r^2\left(1+\frac{m}{2r}\right)^4$.
On the other hand, if $\lambda$ belongs to the interval of positivity for $(\Sigma_r, \gamma_r, H_r)$,
then by Shi--Tam
$$\lambda \leq \frac{\int_\Sigma H_r^0 dA_\gamma}{\int_\Sigma H_r dA_\gamma} = \lambda_r.$$
Thus, $\lambda_r$ is the critical parameter for the Bartnik data.  Some simplifications show
\begin{equation}
\label{eqn_lambda_r}
\lambda_r = \frac{1+\frac{m}{2r}}{1-\frac{m}{2r}}.
\end{equation}
In particular, we have the identity in Schwarzschild space:
$$m = \sqrt{\frac{|\Sigma_r|_g}{16\pi}} \left(1-\frac{1}{\lambda_r^2}\right),$$
for all values of $r$, motivating the following definition of quasi-local mass.
\begin{definition}
\label{def_ql_mass}
Let $\Bcal=(\Sigma, \gamma, H)$ be Bartnik data with critical parameter $\lambda_0$ (from Theorem \ref{thm_interval}).  
Define
$$m(\Bcal)=m(\Sigma, \gamma, H) = \sqrt{\frac{|\Sigma|_\gamma}{16\pi}}\left(1-\frac{1}{\lambda_0^2}\right).$$
\end{definition}
Recall that we assume $\gamma$ has positive Gauss curvature and $H>0$.

\begin{thm} Definition \ref{def_ql_mass} of quasi-local mass satisfies the following properties:
\begin{enumerate}
 \item (nonnegativity) If Bartnik data $\Bcal$ admits a valid fill-in, then its mass $m(\Bcal)$ is nonnegative and is zero
	only if every valid fill-in is static vacuum.  
 \item (spherical symmetry) If Bartnik data $\Bcal$ arises from a coordinate sphere in a Schwarzschild metric of mass 
$m$, then $m(\Bcal)=m$.
 \item (black hole limit).  If $\Bcal_n=(\Sigma, \gamma, H_n)$ is a sequence of Bartnik data and
$H_n \to 0$ uniformly, then
$$\lim_{n \to \infty} m(\Bcal_n) = \sqrt{\frac{|\Sigma|_{\gamma}}{16\pi}}.$$

 \item (ADM-sub-limit)  If $(M,g)$ is an asymptotically flat manifold with nonnegative scalar curvature,
and if $S_r$ is a coordinate sphere of radius $r$ with induced metric $\gamma_r$ and mean curvature $H_r$, then
	\begin{equation}
	\label{eqn_adm_monotonicity}
	 m_{ADM}(M,g) \geq \limsup_{r \to \infty} m(S_r, \gamma_r, H_r).
	\end{equation}
\end{enumerate}
\label{thm_ql_mass}
\end{thm}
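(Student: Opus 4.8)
The four properties are of quite different character, so I would address them in roughly increasing order of difficulty. Properties (2) and (3) are essentially computational/limiting statements that unpack the definition, while (1) and (4) require the structural results proved earlier.

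\emph{Property (1) (nonnegativity).} The key point is that $m(\Bcal) = \sqrt{|\Sigma|_\gamma/16\pi}\,(1-\lambda_0^{-2})$ is nonnegative precisely when $\lambda_0 \geq 1$, and vanishes precisely when $\lambda_0 = 1$. So I would argue: if $\Bcal = (\Sigma,\gamma,H)$ admits a valid fill-in, then $1 \in I_{\geq 0}$, hence by Theorem \ref{thm_interval} we have $1 \leq \lambda_0$ (since $I_+ = (0,\lambda_0)$ and $I_{\geq 0} \subseteq (0,\lambda_0]$), giving $m(\Bcal) \geq 0$. For the equality case: $m(\Bcal) = 0$ forces $\lambda_0 = 1$, so $1 \notin I_+$ (as $I_+ = (0,\lambda_0) = (0,1)$); combined with $1 \in I_{\geq 0}$, this means $1 \in I_0$, i.e.\ $\Bcal = (\Sigma,\gamma,1\cdot H)$ is of zero type. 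By Proposition \ref{prop_static}, every valid fill-in of zero-type data is static vacuum, which is the claimed conclusion.

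\emph{Properties (2) and (3).} For (2), this is exactly the computation carried out in the paragraphs preceding Definition \ref{def_ql_mass}: for a coordinate sphere $\Sigma_r$ in Schwarzschild of mass $m$, the critical parameter is $\lambda_r = \frac{1+m/2r}{1-m/2r}$ from \eqref{eqn_lambda_r}, and substituting into the defining formula gives $m(\Bcal) = \sqrt{|\Sigma_r|_g/16\pi}\,(1-\lambda_r^{-2}) = m$ by the displayed Schwarzschild identity. I would simply cite that computation. For (3), as $H_n \to 0$ uniformly with $\gamma$ (hence $H_0$, the Euclidean embedding mean curvature) fixed, the Shi--Tam bound from Step 3 of the proof of Theorem \ref{thm_interval} gives $\lambda_0(\Bcal_n) \leq \frac{\int_\Sigma H_0\,dA_\gamma}{\int_\Sigma H_n\,dA_\gamma} \to \infty$. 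Hence $1 - \lambda_0(\Bcal_n)^{-2} \to 1$, and since $|\Sigma|_\gamma$ is fixed, $m(\Bcal_n) \to \sqrt{|\Sigma|_\gamma/16\pi}$. (One should also note $\lambda_0(\Bcal_n)$ is eventually $\geq 1$ so the masses are eventually nonnegative, which is automatic from the divergence.)

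\emph{Property (4) (ADM-sub-limit).} This is the main obstacle. The natural strategy: given the asymptotically flat $(M,g)$ of nonnegative scalar curvature, fix a large coordinate sphere $S_r$; the region $\Omega_r$ it bounds together with... no --- rather, one wants to use the \emph{exterior} region. The idea is that the closure of $M$ minus the region inside $S_r$ is itself (after capping or by the positive mass theorem machinery) an asymptotically flat manifold with boundary $S_r$, and one compares $m(S_r,\gamma_r,H_r)$ to $m_{ADM}$. More precisely, I expect the argument to go through the Brown--York mass: the paper states (abstract and introduction) that this new mass is bounded above by the Brown--York mass, so $m(S_r,\gamma_r,H_r) \leq m_{BY}(S_r,\gamma_r,H_r)$, and then one invokes the known fact (Fan--Shi--Tam, or Shi--Tam combined with the expansion of $H_r$ and $H_r^0$ for large coordinate spheres in an AF manifold) that $\limsup_{r\to\infty} m_{BY}(S_r,\gamma_r,H_r) \leq m_{ADM}(M,g)$. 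So the plan is: (i) establish $m(\Bcal) \leq m_{BY}(\Bcal)$ for any Bartnik data admitting a valid fill-in --- this follows because $\lambda_0 \leq \int_\Sigma H_0\,dA_\gamma / \int_\Sigma H\,dA_\gamma =: \mu$, so $1 - \lambda_0^{-2} \leq 1 - \mu^{-2}$, and an elementary inequality shows $\sqrt{|\Sigma|_\gamma/16\pi}\,(1-\mu^{-2}) \leq \frac{1}{8\pi}\int_\Sigma(H_0-H)\,dA_\gamma$ (this last step needs the Minkowski-type inequality $\int_\Sigma H_0\,dA_\gamma \geq \sqrt{16\pi|\Sigma|_\gamma}$, which holds for convex surfaces); (ii) cite the large-sphere limit of Brown--York mass equalling $m_{ADM}$. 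The delicate part is making sure the coordinate spheres $S_r$ have positive Gauss curvature for large $r$ (true in the standard AF coordinates) so that $H_r^0$ is defined and the machinery applies, and handling the $\limsup$ carefully since individual $S_r$ need not admit valid fill-ins a priori --- but if $m(S_r,\gamma_r,H_r)$ is only defined when the critical parameter exists, which it always does by Theorem \ref{thm_interval}, this is not an issue.
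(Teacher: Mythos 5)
Your treatments of properties (1), (2), and (4) follow essentially the same route as the paper: for (1), the chain of equivalences $m(\Bcal)>0 \Leftrightarrow \lambda_0>1 \Leftrightarrow 1\in I_+$, the identification of $m(\Bcal)=0$ with $\lambda_0=1$ and hence (given a fill-in exists) with zero type, and then Proposition \ref{prop_static}; for (4), the comparison $m(\Bcal)\leq m_{BY}(\Bcal)$ --- which is exactly the paper's Lemma \ref{lemma_BY}, proved precisely by your step (i) via the Shi--Tam bound $\lambda_0 \leq \int_\Sigma H_0\,dA_\gamma/\int_\Sigma H\,dA_\gamma$ and the Minkowski inequality --- followed by the Fan--Shi--Tam large-sphere limit of the Brown--York mass. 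Those parts are fine.

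The genuine gap is in property (3). You write that Shi--Tam gives $\lambda_0(\Bcal_n) \leq \int_\Sigma H_0\,dA_\gamma/\int_\Sigma H_n\,dA_\gamma \to \infty$ and conclude $1-\lambda_0(\Bcal_n)^{-2}\to 1$. This is backwards: Shi--Tam provides an \emph{upper} bound on the critical parameter, and the divergence of an upper bound implies nothing about $\lambda_0(\Bcal_n)$ itself (it is consistent with $\lambda_0(\Bcal_n)\equiv 2$, say). What you need is a divergent \emph{lower} bound. One way to get it: fix $\Lambda>0$ arbitrary. By Step 1 of the proof of Theorem \ref{thm_interval} there exists some positive function $H'$ on $\Sigma$ with $(\Sigma,\gamma,H')$ of positive type; since $H'$ has a positive minimum and $H_n\to 0$ uniformly, $\Lambda H_n < H'$ pointwise for all large $n$, and then Lemma \ref{lemma_positive} shows $(\Sigma,\gamma,\Lambda H_n)$ is of positive type, i.e.\ $\lambda_0(\Bcal_n)>\Lambda$ eventually. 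Hence $\lambda_0(\Bcal_n)\to\infty$ and the claimed limit follows. The paper dismisses this step as ``straightforward to check,'' so you do need to supply an argument --- but the one you supplied does not establish the claim.
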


\begin{remark}
The proof of Theorem \ref{thm_ql_mass} uses the positive mass theorem \cite{schoen_yau} implicitly, via Lemma \ref{lemma_BY} below,
which relies on the theorem of Shi--Tam.  On the other hand Theorem \ref{thm_ql_mass} also recovers 
the positive mass theorem: if $(M,g)$ is asymptotically flat, has nonnegative scalar curvature, with $\partial M$
empty or consisting of minimal surfaces, then by property (1),
$m(S_r) \geq 0$ for all $S_r$.  From this, inequality (\ref{eqn_adm_monotonicity}) gives $m_{ADM} \geq 0$. 
\end{remark}

\begin{proof}
\emph{Nonnegativity:  }
Observe the following four statements are equivalent, using Theorem \ref{thm_interval}:
$m(\Bcal) >0$; $\lambda_0 > 1$; the number $1$ belongs to the interval
of positivity $I_+$; $\Bcal$ is of positive type.
Also, if $(\Sigma, \gamma, H)$ is of zero type, then $\lambda_0 = 1$ (as follows from Theorem \ref{thm_interval}),
so $m(\Bcal)$ vanishes.  On the other hand, if $m(\Bcal)$ vanishes, then $\lambda_0 = 1$,
so the data is either negative or zero (again, by Theorem \ref{thm_interval}).  But if it is given that the 
data admits a fill-in, then the data must be of zero type.  By Proposition \ref{prop_static}, any such 
fill-in is static vacuum. 

\emph{Spherical symmetry:  } This is clear from the construction at the beginning of this section; we defined 
quasi-local mass so that it has this property.

\emph{Black hole limit:  } It is straightforward to check that if $H_n \to 0$ uniformly, 
then the sequence of critical parameters $\lambda_n$ diverges to infinity.

\emph{ADM-sub-limit:  } For all $r$ sufficiently large, the coordinate spheres $S_r$ have positive mean and Gauss
curvatures.  To prove (\ref{eqn_adm_monotonicity}), recall that the Brown--York mass 
limits to the ADM mass in
the sense that
$$m_{ADM}(M,g) = \lim_{r \to \infty} m_{BY}(S_r).$$
(See Theorem 1.1 of \cite{fan_shi_tam} and the references therein.)
Since we assume $(M,g)$ has nonnegative scalar curvature, $S_r$ is of positive or zero type for all $r$
for which the coordinate sphere is defined.  We invoke Lemma \ref{lemma_BY} below, which states
$m(S_r) \leq m_{BY}(S_r)$, completing the proof.
\end{proof}

\begin{lemma}  
\label{lemma_BY}
For Bartnik data $\Bcal=(\Sigma, \gamma,H)$ of nonnegative type, 
$$m(\Bcal) \leq m_{BY}(\Bcal).$$
\end{lemma}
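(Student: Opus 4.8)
The plan is to bound the critical parameter $\lambda_0$ from above by the Shi--Tam ratio and then reduce the whole statement to the classical Minkowski inequality for convex surfaces in $\R^3$. Set $A=|\Sigma|_\gamma$, $a=\int_\Sigma H\,dA_\gamma$, $b=\int_\Sigma H_0\,dA_\gamma$, and $\Lambda=b/a$, where $H_0>0$ is the mean curvature of the (necessarily convex) isometric embedding of $(\Sigma,\gamma)$ into $\R^3$. First I would observe that, since $\Bcal$ is of nonnegative type, $1\in I_{\geq 0}$, so applying Theorem \ref{thm_shi_tam} to a valid fill-in of $\Bcal$ gives $b\geq a$, hence $\Lambda\geq 1$ and $\lambda_0\geq 1$. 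Moreover, Step 3 of the proof of Theorem \ref{thm_interval} shows that every element of $I_{\geq 0}$ is at most $\Lambda$; since $\lambda_0=\sup I_{\geq 0}$ by Theorem \ref{thm_interval}, we obtain $\lambda_0\leq\Lambda$.

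Because $t\mapsto 1-t^{-2}$ is increasing on $(0,\infty)$ and $1\leq\lambda_0\leq\Lambda$, this yields
$$m(\Bcal)=\sqrt{\frac{A}{16\pi}}\left(1-\frac{1}{\lambda_0^2}\right)\leq\sqrt{\frac{A}{16\pi}}\left(1-\frac{1}{\Lambda^2}\right)=\sqrt{\frac{A}{16\pi}}\cdot\frac{(b-a)(b+a)}{b^2},$$
so it suffices to check that the right-hand side is at most $m_{BY}(\Bcal)=\frac{b-a}{8\pi}$. The case $a=b$ is trivial (both sides are $0$, and the data then bounds a Euclidean domain by the rigidity in Theorem \ref{thm_shi_tam}), so I would assume $b>a$ and divide by $b-a>0$, reducing the claim to $\sqrt{\tfrac{A}{16\pi}}\,(a+b)/b^2\leq 1/(8\pi)$. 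Estimating $(a+b)/b^2\leq 2/b$ via $a\leq b$, it remains only to prove $\sqrt{16\pi A}\leq b=\int_\Sigma H_0\,dA_\gamma$.

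This last inequality is precisely the classical Minkowski inequality for the convex surface $(\Sigma,\gamma)\hookrightarrow\R^3$, namely $\int_\Sigma H_0\,dA_\gamma\geq\sqrt{16\pi|\Sigma|_\gamma}$, with equality if and only if the image is a round sphere; one checks the normalization against a sphere of radius $R$, for which $H_0=2/R$ and $\int H_0\,dA=8\pi R=\sqrt{16\pi\cdot 4\pi R^2}$, consistent with the paper's sign convention. I do not expect a serious obstacle: apart from invoking Minkowski's inequality, the argument is Shi--Tam together with elementary algebra, and the only points deserving care are the direction of the inequality when dividing by $b-a$ and recalling the correct constant in Minkowski's inequality. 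As a byproduct, tracing equality through the chain shows $m(\Bcal)=m_{BY}(\Bcal)$ can hold only when $a=b$, i.e.\ when $\Bcal$ bounds a domain in $\R^3$.
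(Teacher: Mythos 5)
Your argument is correct and is essentially identical to the paper's proof of Lemma \ref{lemma_BY}: bound $\lambda_0$ above by the Shi--Tam ratio $\int_\Sigma H_0\,dA_\gamma / \int_\Sigma H\,dA_\gamma$, use monotonicity of $t\mapsto 1-t^{-2}$, estimate $a+b\leq 2b$, and close with the Minkowski inequality $\left(\int_\Sigma H_0\,dA_\gamma\right)^2\geq 16\pi|\Sigma|_\gamma$. The only differences are cosmetic (separating the $a=b$ case and the remark on equality), so nothing further is needed.
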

\begin{proof}
Let $H_0$ be the mean curvature of an isometric embedding of $(\Sigma,\gamma)$ in $\R^3$, which is well-defined
because $K_\gamma >0$.  By Shi--Tam,
we have $\lambda_0 \leq \frac{\int_\Sigma H_0 dA_\gamma}{\int_\Sigma H dA_\gamma}$.
In particular,
\begin{align}
 m(\Bcal) &= \sqrt{\frac{|\Sigma|_\gamma}{16\pi}}\left(1-\frac{1}{\lambda_0^2}\right)\nonumber\\
	&\leq \sqrt{\frac{|\Sigma|_\gamma}{16\pi}}\left(1-\left(\frac{\int_\Sigma H dA_\gamma}{\int_\Sigma H_0 dA_\gamma}\right)^2\right) \label{miao_mass}\\
	&\leq \sqrt{\frac{|\Sigma|_\gamma}{16\pi}}\left(\frac{\left(\int_\Sigma H_0 dA_\gamma+ \int_\Sigma HdA_\gamma\right)\left(\int_\Sigma H_0dA_\gamma - \int_\Sigma HdA_\gamma\right)}{\left(\int_\Sigma H_0dA_\gamma\right)^2 }\right)\nonumber\\
	&\leq \sqrt{\frac{|\Sigma|_\gamma}{16\pi}}\frac{16\pi m_{BY}(\Bcal)}{\int_\Sigma H_0dA_\gamma }\nonumber,
\end{align}
where again we have used Shi--Tam and the fact that the data is of nonnegative type.
The Minkowski inequality for convex regions in $\R^3$ \cite{isoperimetric}
states that
$$\left(\int_\Sigma H_0 dA_\gamma\right)^2 \geq 16\pi |\Sigma|_\gamma.$$
Together with the above, this completes the proof.
\end{proof}
The right-hand side of (\ref{miao_mass}) is a definition of quasi-local mass proposed by 
Miao, which he observed is bounded above by the Brown--York mass using the same argument \cite{miao_local_rpi}.

\subsection{Physical remarks}
\label{sec_physical}
It has been suggested in the literature (see \cite{bartnik_3_metrics} for instance) that if the quasi-local
mass of the boundary of a region $\Omega$ vanishes, then $\Omega$ ought to be flat.  The Brown--York mass
and Bartnik mass both satisfy this property (see \cites{bartnik_mass, imcf}).  Definition \ref{def_ql_mass}
suggests an alternative viewpoint that such $\Omega$ ought to be \emph{static vacuum}, which includes flat
metrics as a special case.  Indeed, one could make a physical argument that in a region of a spacetime
that is static vacuum, quasi-local mass should vanish since there is no matter content and no gravitational
dynamics (cf. \cite{anderson}, which also discusses the vanishing of quasi-local mass on static vacuum
regions).

\section{Examples}
\label{sec_examples}
Let $(M,g)$ be a Riemannian 3-manifold.  If $\Omega$ is a subset of $M$ with boundary $\partial \Omega$ 
homeomorphic to $S^2$, and if $\partial \Omega$ has positive mean curvature $H$ (with respect to some
chosen normal direction),
define
$$m(\Omega)=m(\partial \Omega, g_{T\partial \Omega}, H),$$
where $T\partial \Omega$ is the tangent bundle of $\partial \Omega$.  If $g$ has nonnegative scalar curvature,
then $m(\Omega) \geq 0$ by Theorem \ref{thm_ql_mass}.

\emph{Euclidean space:  }
Consider $\R^3$ with the standard flat metric.  Let $\Omega \subset \R^3$ be a strictly convex open set with 
smooth boundary that is not round, with mean curvature $H_0$ and induced metric $\gamma_0$.  There is no
valid fill-in of $(\partial \Omega,\gamma_0, H_0)$ with mean curvature $H > H_0$; this statement follows
from the Shi--Tam inequality (\ref{eqn_shi_tam}) or alternatively by Miao's ``positive mass theorem with 
corners'' \cite{miao}.  This implies $\lambda_0=1$, and so
$m(\Omega)=0$.  The Brown--York mass of $\Omega$ also vanishes, as $H_0=H$.  
A straightforward computation shows that the Hawking mass of $\Omega$ is 
strictly negative.

\emph{Schwarzschild, positive mass:  }
Next let $(M,g)$ be a Schwarzschild manifold of mass $m>0$ (see equation (\ref{eqn_schwarz})).  Suppose 
$\Omega \subset M$ is topologically an open 3-ball with boundary $\Sigma$ disjoint from the horizon.  
\begin{lemma}
For the Bartnik data induced on $\partial \Omega$, $\lambda_0 = 1$.  Equivalently, $m(\Omega)=0$.
\end{lemma}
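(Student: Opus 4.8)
The strategy is to prove $\lambda_0 \ge 1$ and $\lambda_0 \le 1$ separately, the first being elementary and the second requiring a gluing construction together with the rigidity of the Riemannian Penrose inequality.

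For $\lambda_0 \ge 1$: since $\Omega$ is a topological $3$-ball disjoint from the horizon, $\ol\Omega$ is compact with $\partial\ol\Omega = \Sigma$, and as a subdomain of the scalar-flat Schwarzschild metric it has $R \equiv 0$. Hence $(\ol\Omega, g)$ is a valid fill-in of $(\Sigma,\gamma,H)$ of the first type, so the data is of nonnegative type and $\lambda_0 \ge 1$ by Theorem \ref{thm_interval}. (This fill-in is in fact static vacuum, the Schwarzschild potential $\tfrac{1-m/2r}{1+m/2r}$ being positive off the horizon and hence on all of $\ol\Omega$; this is consistent with what one expects once the data turns out to be of zero type.)

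For $\lambda_0 \le 1$ I argue by contradiction. If $\lambda_0 > 1$ then $(\Sigma,\gamma,H)$ is of positive type, so by Proposition \ref{prop_positive} there is a valid fill-in $(\Omega',g')$ with $R_{g'} > 0$ everywhere and nonempty minimal boundary $S' = \partial\Omega' \sm \Sigma$. Glue $(\Omega',g')$ to the Schwarzschild exterior $(M \sm \Omega, g)$ along $\Sigma$ to form a manifold $N$. Both sides induce $\gamma$ on $\Sigma$, and with respect to a consistently chosen unit normal there both sides assign mean curvature $H$ (this is where the sign conventions enter), so $N$ carries a Lipschitz metric, smooth away from $\Sigma$ and scalar-flat outside $\Omega'$, whose second fundamental form along $\Sigma$ matches from the two sides. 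By Miao's corner-smoothing result \cite{miao} (followed, if necessary, by a small conformal adjustment as in the proof of Proposition \ref{prop_positive}), $N$ may be taken to be smooth and asymptotically flat with $R \ge 0$, with $R > 0$ on a nonempty open subset of $\Omega'$, with ADM mass still equal to $m$, and with boundary a union of minimal surfaces among which are the Schwarzschild horizon $\mathcal H$, of area $16\pi m^2$, and the surface $S'$.

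Now apply the Riemannian Penrose inequality, in Bray's form allowing disconnected horizons \cite{bray_RPI}, together with its rigidity statement. Let $\Sigma_{out}$ be the outermost minimal surface of $N$, of area $A$, so $m \ge \sqrt{A/16\pi}$. The surface $S'$ is minimal and is not enclosed by $\mathcal H$ (since $\Omega$ was disjoint from the horizon, $\Omega'$ lies off to the side of $\mathcal H$ in $N$), so $\mathcal H$ is not by itself outermost: if $\Sigma_{out}$ has $\mathcal H$ as one of several disjoint components, then $A \ge 16\pi m^2 + |S'| > 16\pi m^2$, contradicting $m \ge \sqrt{A/16\pi}$. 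Otherwise $\Sigma_{out}$ is a single minimal surface enclosing both $\mathcal H$ and $S'$; it must meet $\Omega'$ (the Schwarzschild exterior contains no closed minimal surface other than $\mathcal H$), so its exterior contains points where $R > 0$ and is therefore not a Schwarzschild exterior. By the rigidity of the Penrose inequality this forces $m > \sqrt{A/16\pi}$, i.e.\ $A < 16\pi m^2$. To reach a contradiction it then suffices to establish $A \ge 16\pi m^2$ — equivalently, that $\mathcal H$ remains outer-area-minimizing in $N$ — and this I expect to be the main obstacle. The plan for it is a cut-and-paste comparison: any surface enclosing $\mathcal H$ should be modifiable, by replacing its portion inside $\Omega'$ with a disk on $\Sigma$ bounded by the same curves, into a surface lying in the genuine Schwarzschild exterior that still encloses $\mathcal H$ and hence has area at least $16\pi m^2$; carrying this out carefully — using that $\Omega$ is disjoint from the horizon, and if necessary choosing $S'$ to be outer-area-minimizing inside $\Omega'$ — should yield $A \ge 16\pi m^2$ and close the argument. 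Once $\lambda_0 \le 1$ is known, $\lambda_0 = 1$ and $m(\Omega) = \sqrt{|\Sigma|_\gamma/16\pi}\,(1 - \lambda_0^{-2}) = 0$.
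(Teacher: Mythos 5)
Your first half ($\lambda_0 \geq 1$) and the broad outline of the second half (glue a positive-type fill-in $\Omega'$ to the Schwarzschild exterior along $\Sigma$, smooth the corner via Miao, and contradict the Riemannian Penrose inequality) agree with the paper. But there is a genuine gap precisely where you flag it: your argument hinges on showing that the horizon $\mathcal{H}$ remains outer-area-minimizing in the glued manifold $N$, i.e.\ $A \geq 16\pi m^2$, and the cut-and-paste plan you sketch does not deliver this. Replacing the portion of a competitor surface $T$ lying inside $\Omega'$ by a cap on $\Sigma$ produces a surface $T'$ in the genuine Schwarzschild exterior with $|T'| \geq 16\pi m^2$, but $|T'|$ may well exceed $|T|$ (the cap on $\Sigma$ need not have smaller area than the piece of $T$ it replaces), so no lower bound on $|T|$ follows. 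Your case analysis also leans on the rigidity statement of the Penrose inequality and on arranging $R_{g'}>0$ everywhere in $\Omega'$, neither of which turns out to be needed.

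The paper closes this step differently and more directly. Instead of asking whether $\mathcal{H}$ is still outer-minimizing, it considers the minimum area $A'$ in the homology class of the \emph{entire} boundary $\partial M' = \mathcal{H} \cup (\partial\Omega'\sm\Sigma)$ and shows the minimizer $S'$ must contain $\mathcal{H}$ as a \emph{proper} subset: the convex foliation of the Schwarzschild exterior by the spheres $\{r=\mathrm{const}\}$ acts as a barrier forbidding $S'$ from entering the interior of $M'\sm\Omega$, while homology forces $S'$ both to meet $M'\sm\Omega$ (accounting for $[\mathcal{H}]$) and to carry additional components inside $\Omega'$ (accounting for $[\partial\Omega'\sm\Sigma]$). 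Hence $A' > 16\pi m^2$ outright, and the inequality $m' \geq \sqrt{A'/16\pi}$ with $m'=m=\sqrt{16\pi m^2/16\pi}$ is already a contradiction --- no rigidity, no dichotomy on the connectivity of the outermost horizon, and no strict positivity of the scalar curvature in $\Omega'$ are required. If you want to salvage your route, the barrier argument from the convex foliation is the missing ingredient you should import.
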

Figure \ref{fig_schwarz} gives a depiction of the Bartnik data in question.
\begin{figure}[ht]
\caption{Off-center ball in Schwarzschild}
\begin{center}
\includegraphics[scale=0.7]{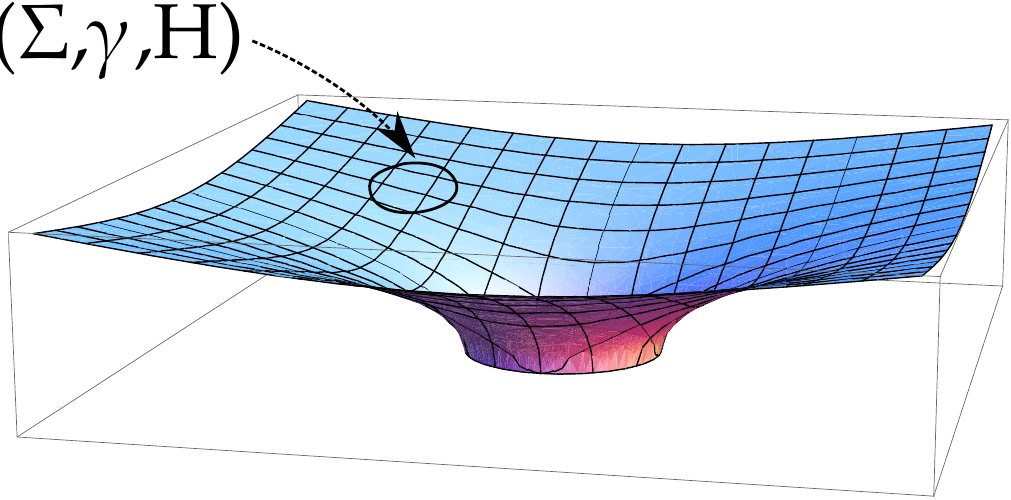}
\end{center}
\flushleft\footnotesize{The Bartnik data $(\Sigma, \gamma, H)$ arises from the boundary of a small
ball away from the horizon in a Schwarzschild manifold.}
\label{fig_schwarz}
\end{figure}

\begin{proof}
Certainly $\lambda_0 \geq 1$,
since $\Omega$ is tautologically a valid fill-in.  If $\lambda_0> 1$, there exists a valid fill-in 
$\Omega'$ of $\Sigma$ (with the same boundary metric
and mean curvature), such that $\partial \Omega' \sm \Sigma$ is nonempty and consists of minimal surfaces.  Glue
$\Omega'$ to $M \sm \Omega$  along $\Sigma$, obtaining a manifold $(M', g')$ that is smooth and has
nonnegative scalar curvature away from $\Sigma$.  Moreover, $g'$ is Lipschitz across $S$, and
$\partial M'$ consists of minimal surfaces (including the Schwarzschild horizon).  Let $A$ and $A'$
be the minimum areas in the homology class of the boundary for the respective manifolds $(M,g)$ and $(M',g')$.
$A$ is attained uniquely by the horizon $S$ in $M$, and by a similar consideration $A'$ is attained by
a surface $S'$ that includes $S$ as a proper subset. (To see this, observe that the Schwarzschild manifold minus its horizon is foliated by
the $\{r= \text{const.}\}$ spheres, which are convex; thus $S'$ may not intersect the interior of $M' \sm \Omega$ yet must intersect $M' \sm \Omega$ to be homologous
to the boundary of $M'$.) Thus $A' > A$.
By direct computation,
$$m = \sqrt{\frac{A}{16\pi}},$$
and so
\begin{equation}
\label{eqn_rpi_violated}
m' < \sqrt{\frac{A'}{16\pi}},
\end{equation}
where $m'=m$ is the ADM mass of $(M',g')$ (equal because $g$ and $g'$ agree outside a compact set).  Using
an argument similar to Miao \cite{miao}, one can mollify $(M', g')$ to a smooth, asymptotically flat metric
of nonnegative scalar curvature and minimal boundary that gives strict inequality in (\ref{eqn_rpi_violated}).
This violates the well-known Riemannian Penrose inequality \cites{imcf,bray_RPI}.  
This contradiction implies that $\lambda_0=1$, so $m(\Omega)=0$.
\end{proof}

Thus, we have examples of Bartnik data of zero type that do not arise as the boundaries of regions in flat space.  
In other words, we have non-flat domains $\Omega$ for which $m(\Omega)=0$.  Of course, by Theorem
\ref{thm_ql_mass}, such $\Omega$ must be static vacuum (as is the case for the Schwarzschild metric).

To further extend this example, Huisken and Ilmanen \cite{imcf} show that there exist small balls $\Omega$ away from
the horizon in the Schwarzschild manifold whose Bartnik data $(\Sigma, \gamma, H)$
have strictly positive Hawking mass.
Moreover, the case of equality of Theorem \ref{thm_shi_tam} of Shi and Tam shows that the Brown--York 
mass of $\Omega$ is also strictly positive.  It follows that  for
$\lambda> 1$ sufficiently close to 1, the data $(\Sigma, \gamma, \lambda H)$
is of negative type, yet still has strictly positive $m_H$ and $m_{BY}$.

Note that we have not stated $\Omega$ being static vacuum implies $m(\Omega)=0$.  Counterexamples are unknown
to the author.

\emph{Schwarzschild, negative mass:  }
Let $(M,g)$ be the Schwarzschild metric of mass $m<0$ (defined by (\ref{eqn_schwarz}) on $\R^3$ minus
the closed ball of radius $|m|/2$).  The Bartnik data induced on spheres $\{r=\text{const.}\}$
is of negative type because the critical parameter $\lambda_r$ is less than one by (\ref{eqn_lambda_r}).

\subsection{Example $m(\lambda)$ function}
\label{sec_example_m_lambda}
Here we give an explicit computation of the inner mass function $m(\lambda)$ defined in section \ref{sec_inner_mass_function}
for Bartnik data $\Bcal$ corresponding to the coordinate sphere $S_r$ of radius $r>m/2$ in the Schwarzschild metric of mass $m>0$, with induced
metric $\gamma$ and mean curvature $H$. The Riemannian Penrose inequality \cite{bray_RPI}, 
shows that\footnote{In a Schwarzschild manifold, $m=\sqrt{\frac{A}{16\pi}}$,
where $A$ is the area of the horizon.  Now, $m_{inner}(\Bcal) \geq m$ follows from the definition.  If $m_{inner}(\Bcal) > m$, there exists
a fill-in of $\Bcal$ with minimum area $A'>A$ attained by a minimal surface.  One can then arrange a strict violation of the Penrose
inequality by gluing the exterior Schwarzschild region of $\Bcal$ to the fill-in.  The gluing is only Lipschitz across $\Bcal$, but
the smoothing and conformal techniques in Miao \cite{miao} can be used to produce a smooth example, leading to a contradiction.} the Bartnik inner mass of $\Bcal$ equals $m$.  Let $\lambda > 0$;
the data $(S_r, \gamma, \lambda H)$ embeds uniquely as a coordinate sphere $S_{r'}$ of some radius $r'$ in a Schwarzschild metric of some mass $m'$.  
Equating the areas of $S_r$ and $S_{r'}$ in their respective metrics, we have
\begin{equation}
4\pi r^2 \left(1+ \frac{m}{2r}\right)^4 = 4\pi (r')^2 \left(1+ \frac{m'}{2r'}\right)^4.
\end{equation}
Equating $\lambda H$ with the mean curvature of $S_{r'}$ leads to
\begin{equation}
\lambda \left(\frac{2}{r}\left(1+\frac{m}{2r}\right)^{-2} - \frac{2m}{r^2}\left(1+\frac{m}{2r}\right)^{-3} \right) 
=\frac{2}{r'}\left(1+\frac{m'}{2r'}\right)^{-2} - \frac{2m'}{(r')^2}\left(1+\frac{m'}{2r'}\right)^{-3}.
\end{equation}
With some calculations, one can compute $r'$ and $m'$ explicitly.  For $\lambda \in (0, \lambda_0)$, we know
$m(\lambda)$, the Bartnik inner mass of $S_{r'}$, simply equals $m'$ (again, by the Riemannian Penrose inequality).
Omitting some details, we give the formula:
\begin{equation}
m(\lambda) = \frac{r}{2} \left(\left(1+\frac{m}{2r}\right)^2-\lambda^2\left(1-\frac{m}{2r}\right)^2\right).
\end{equation}
As anticipated by Proposition \ref{prop_m_lambda}, $m(\lambda)$ is continuous, decreasing, 
and $m(0) = \sqrt{\frac{A}{16\pi}}$, where $A$ is the area of $S_r$ in the Schwarzschild metric of mass $m$.  Moreover, $m(\lambda)$ vanishes at the critical value $\lambda_0 = \frac{1+\frac{m}{2r}}{1-\frac{m}{2r}}$ (computed in section \ref{sec_ql_mass}), a property conjectured to hold in general (see the paragraph following Problem \ref{prob_zero_type} 
in section \ref{sec_open_problems}).

\section{An algebraic operation on quasi-local mass functionals}
\label{sec_product}
For a quasi-local mass functional $m_i$ (i.e., a map from the set of Bartnik data to the real numbers), define
the following quantity in $[-\infty, \infty]$:
$$\lambda_i(\Sigma, \gamma, H) = \sup \{ \lambda>0 : 
	m_i(\Sigma, \gamma, \lambda H) \geq 0\}.$$
In other words, $\lambda_i$ measures how much one can scale the boundary mean curvature until the mass
$m_i$ becomes negative.  Up to this point, we have studied this quantity for the case in which
$m_i$ is the Bartnik inner mass (since $m_{inner}(\Sigma, \gamma, H) \geq 0$ if and only if
$(\Sigma, \gamma, H)$ has a valid fill-in).  Theorem \ref{thm_interval} implies that $\lambda_i$ is a 
positive, finite number for the case $m_i =m_{inner}$.

Here we use the number $\lambda_i$ to construct an algebraic product of two quasi-local
mass functionals, of which that constructed in section \ref{sec_ql_mass} is a special case.
We restrict to quasi-local mass functionals $m_i$ satisfying the following mild assumptions
on all Bartnik data:
\begin{enumerate}
 \item $\lambda_i(\Sigma, \gamma, H)$ is a positive real number, and
 \item $m_i(\Sigma, \gamma, \lambda H)$ is decreasing as a function of $\lambda$.
\end{enumerate}
For example the Hawking mass, Brown--York
mass, and Bartnik inner mass (see Proposition \ref{prop_m_lambda}) satisfy these properties.

Define the following binary operation on the set of quasi-local mass functionals. 
Given $m_1$ and $m_2$, let
\begin{equation}
 (m_1 * m_2) (\Sigma, \gamma, H) = m_1 \left(\Sigma, \gamma, \frac{\lambda_1}{\lambda_2} H\right),
\end{equation}
where $\lambda_i=\lambda_i(\Sigma, \gamma, H)$ for $i=1,2$.  This operation satisfies a number of properties.
\begin{prop} Let $m_1, m_2,$ and $m_3$ be quasi-local mass functionals.
\begin{enumerate}
 \item $m_1 * m_1 = m_1$.
 \item $(m_1 * m_2) * m_3 = m_1 * m_3 = m_1 * (m_2 * m_3)$.  In particular, $*$ is associative.
 \item $m_2$ controls the sign of $m_1 * m_2$ in the following sense:
  \begin{enumerate}
   \item $m_1 * m_2 (\Sigma, \gamma, H) > 0$ if and only if $m_2(\Sigma, \gamma, H) > 0$, and
   \item $m_1 * m_2 (\Sigma, \gamma, H) = 0$ if and only if $m_2(\Sigma, \gamma, H) = 0$.
  \end{enumerate}
  \item If $m_1$ has the black hole limit property (see Theorem \ref{thm_ql_mass}), so does $m_1 * m_2$.
  \item If both $m_1$ and $m_2$ produce the value $m$ on concentric round spheres in the Schwarzschild metric
  of mass $m$, then so does $m_1 * m_2$.
  \item If $m_2 \leq m_3$ (as functions), then $m_1 * m_2 \leq m_1 * m_3$.
  \end{enumerate}
\end{prop}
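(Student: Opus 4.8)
The plan is to reduce the whole proposition to a single identity describing how $*$ interacts with rescalings of the mean curvature, and then read off the six items, with only (4) requiring genuine analysis. First I would record two elementary facts. Directly from the definition of $\lambda_i$ as a supremum, for every $t>0$ one has the scaling identity $\lambda_i(\Sigma,\gamma,tH)=\lambda_i(\Sigma,\gamma,H)/t$. Substituting this into the definition of $*$, the two factors of $t$ cancel in the ratio, and we obtain the key lemma: writing $\lambda_i:=\lambda_i(\Sigma,\gamma,H)$,
\[
(m_1 * m_2)(\Sigma,\gamma,tH)=m_1\!\left(\Sigma,\gamma,\tfrac{\lambda_1}{\lambda_2}\,tH\right),\qquad t>0.
\]
From this, a one-line substitution shows that the parameter attached to $m_1*m_2$ is $\lambda_{1*2}(\Sigma,\gamma,H)=\lambda_2(\Sigma,\gamma,H)$, and that $m_1*m_2$ again satisfies the two standing hypotheses (its $\lambda$ is a positive real, and $t\mapsto(m_1*m_2)(\Sigma,\gamma,tH)$ is decreasing, being a decreasing function of $\tfrac{\lambda_1}{\lambda_2}t$); this legitimizes iterating the operation in (2).

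Next I would clear the formal items. Property (1) is immediate, since $\lambda_1/\lambda_1=1$. For (2), feeding $\lambda_{1*2}=\lambda_2$ and $\lambda_{2*3}=\lambda_3$ into the key lemma, both $(m_1*m_2)*m_3$ and $m_1*(m_2*m_3)$ unwind to $m_1(\Sigma,\gamma,\tfrac{\lambda_1}{\lambda_3}H)=(m_1*m_3)(\Sigma,\gamma,H)$, so all three agree and $*$ is associative. For (6), $m_2\le m_3$ forces $\{t>0:m_2(\Sigma,\gamma,tH)\ge0\}\subseteq\{t>0:m_3(\Sigma,\gamma,tH)\ge0\}$, hence $\lambda_2\le\lambda_3$ and $\tfrac{\lambda_1}{\lambda_2}\ge\tfrac{\lambda_1}{\lambda_3}$; since $s\mapsto m_1(\Sigma,\gamma,sH)$ is decreasing, the key lemma at $t=1$ yields $m_1*m_2\le m_1*m_3$.

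Items (3) and (5) use the sign dictionary supplied by the hypotheses: $m_i(\Sigma,\gamma,H)$ is positive, zero, or negative according as $\lambda_i(\Sigma,\gamma,H)$ is $>1$, $=1$, or $<1$. By the key lemma and the scaling identity, $(m_1*m_2)(\Sigma,\gamma,H)=m_1(\Bcal')$ where $\Bcal'=(\Sigma,\gamma,\tfrac{\lambda_1}{\lambda_2}H)$ has $m_1$-parameter equal to $\lambda_2$; so the sign of $(m_1*m_2)(\Sigma,\gamma,H)$ is governed by $\lambda_2$ versus $1$, which by the dictionary for $m_2$ is exactly the sign of $m_2(\Sigma,\gamma,H)$. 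This proves (3). For (5), if $(\Sigma,\gamma,H)$ is the Bartnik data of a coordinate sphere $S_r$ in Schwarzschild of mass $m$, the computation at the start of Section \ref{sec_ql_mass} identifies $(\Sigma,\gamma,\lambda_r H)$, with $\lambda_r=\tfrac{1+m/2r}{1-m/2r}$, as the data of a coordinate sphere of the same area in flat space; thus $m_i$ vanishes there, and it is positive on spheres of smaller mean curvature (which lie in positive-mass Schwarzschild) and negative on larger ones, so $\lambda_i(\Sigma,\gamma,H)=\lambda_r$ for both $i$. Hence the ratio is $1$ and $(m_1*m_2)(\Sigma,\gamma,H)=m_1(\Sigma,\gamma,H)=m$.

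The remaining item (4) is the one I expect to be the real obstacle. Assume $m_1$ has the black hole limit property and $H_n\to0$ uniformly on $(\Sigma,\gamma)$. The key lemma gives $(m_1*m_2)(\Sigma,\gamma,H_n)=m_1(\Sigma,\gamma,\tilde{H}_n)$ with $\tilde{H}_n=\tfrac{\lambda_1(\Sigma,\gamma,H_n)}{\lambda_2(\Sigma,\gamma,H_n)}H_n$. Monotonicity and the black hole limit of $m_1$ together show $m_1(\Sigma,\gamma,H')\le\sqrt{|\Sigma|_\gamma/16\pi}$ for \emph{all} Bartnik data (let $s\to0^+$ in $m_1(\Sigma,\gamma,sH')$), which already gives $\limsup_n(m_1*m_2)(\Sigma,\gamma,H_n)\le\sqrt{|\Sigma|_\gamma/16\pi}$; the matching $\liminf$ would follow by applying the black hole limit of $m_1$ to the sequence $\tilde{H}_n$, provided $\tilde{H}_n\to0$ uniformly. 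Writing $\hat{H}_n=H_n/\|H_n\|_\infty$, the scaling identity rewrites $\|\tilde{H}_n\|_\infty$ as $\big(\lambda_1(\Sigma,\gamma,\hat{H}_n)/\lambda_2(\Sigma,\gamma,\hat{H}_n)\big)\,\|H_n\|_\infty$, so it suffices to bound the ratio $\lambda_1/\lambda_2$ uniformly over mean curvatures of sup norm $1$. This boundedness is the crux and the one place where the hypotheses must be used non-formally (finiteness of the $\lambda_i$ and monotonicity in $\lambda$, together with --- for the concrete functionals of interest such as the Hawking, Brown--York, and Bartnik inner masses --- the fact that $\lambda_i$ is controlled by integral quantities of $H$ rather than by its sup norm). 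I would isolate it as a separate lemma, prove it for those functionals, and remark that in general it holds under a mild equicontinuity-type hypothesis on $H\mapsto\lambda_i(\Sigma,\gamma,H)$; granting it, (4) closes.
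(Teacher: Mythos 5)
Your proposal is correct and follows essentially the same route as the paper, which asserts that the properties ``follow immediately from the definitions'' and only sketches (2) and (3a); your key lemma --- the scaling identity $\lambda_i(\Sigma,\gamma,tH)=\lambda_i(\Sigma,\gamma,H)/t$ and the resulting fact that $m_1*m_2$ has critical parameter $\lambda_2$ --- is exactly the observation the paper's sketch rests on. Your extra care on item (4) is warranted: the paper gives no argument there, and the need to control $\lambda_1/\lambda_2$ uniformly along a general sequence $H_n\to 0$ (rather than a sequence of rescalings of a fixed $H$) is a genuine subtlety, which your reduction via $\hat H_n=H_n/\|H_n\|_\infty$ and verification for the concrete functionals handles reasonably.
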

The above properties all essentially follow immediately from the definitions and so we omit the proof.  Below we sketch some
of the steps as a sample.
\begin{proof}[Sketch:]
We compute $(m_1 * m_2) * m_3$.  First, $(m_1 * m_2) (\Sigma, \gamma, H) = m_1(\Sigma, \gamma, \frac{\lambda_1}{\lambda_2} H)$
has critical parameter $\lambda_2$.  Then
\begin{align*}
((m_1 * m_2) * m_3(\Sigma, \gamma, H)) &= (m_1 * m_2)\left(\Sigma, \gamma, \frac{\lambda_2}{\lambda_3} H\right)\\
&=m_1\left(\Sigma, \gamma, \frac{\lambda_1}{\lambda_2} \frac{\lambda_2}{\lambda_3}H\right),
\end{align*}
which equals $(m_1 * m_3)(\Sigma, \gamma, H)$.

We also demonstrate property (3a).  Note $(m_1 * m_2) (\Sigma, \gamma, H) = m_1(\Sigma, \gamma, \frac{\lambda_1}{\lambda_2} H)$
is positive if and only if $\frac{\lambda_1}{\lambda_2} < \lambda_1$; that is, $\lambda_2 > 1$.  However, $\lambda_2 > 1$ if and only
if $m_2(\Sigma, \gamma, H) > 0$.
\end{proof}

In the next section, we demonstrate $m_1 * m_2$ generally does not equal $m_2 * m_1$.

\subsection{Examples of $m_1 * m_2$} $\;$

\emph{Hawking mass and Bartnik inner mass:  } 
The quasi-local mass of Definition \ref{def_ql_mass} is equal to $m_H * m_{inner}$, where
$m_H$ is the Hawking mass.  To see this, note that
$\lambda_H = \sqrt{\frac{16\pi}{\int_\Sigma H^2 dA_\gamma}}$ and
$\lambda_{inner} = \lambda_0$. 
Then by definition,
\begin{align*}
m_H * m_{inner} (\Sigma, \gamma, H) &= m_H\left(\Sigma, \gamma, \frac{\lambda_H}{\lambda_0} H\right)\\
	&= \sqrt{\frac{|\Sigma|_\gamma}{16\pi}}\left(1 - \frac{1}{\lambda_0^2}\right).
\end{align*}
We reiterate that $m_H * m_{inner}$ inherits the following property
from $m_{inner}$: vanishing precisely on Bartnik data of zero type.

\emph{Hawking mass and Brown--York mass:  }
To compute $m_H * m_{BY}$, we note $\lambda_H$ was found in the last example, and
$\lambda_{BY} = \frac{\int_{\Sigma} H_0 dA_\gamma}{\int_\Sigma H dA_\gamma}.$
Using the definition,
$$m_H * m_{BY} (\Sigma, \gamma, H) = \sqrt{\frac{|\Sigma|_\gamma}{16\pi}}\left(1- 
  \left(\frac{\int_\Sigma H dA_{\gamma}}{\int_\Sigma H_0 dA_{\gamma}}\right)^2\right).$$
This quasi-local mass was written down in a different context by Miao \cite{miao_local_rpi}.

\emph{Brown--York mass and Hawking mass:  }
The steps from the last example show
$$m_{BY}*m_H (\Sigma, \gamma, H) = \int_\Sigma H_0 dA_\gamma \left(1- \sqrt{\frac{\int_\Sigma H^2 dA_\gamma}{16\pi}}\right),$$
illustrating concretely the non-commutativity of $*$.

\section{Concluding remarks and open problems}
\label{sec_open_problems}
We conclude by mentioning some questions raised in this paper.
\begin{prob}
Determine whether the quasi-local mass of Definition \ref{def_ql_mass} is monotone under some flow.
\end{prob}
Monotonicity means that if $\{(\Sigma_t,\gamma_t, H_t)\}_{t \in [0, \epsilon)}$ is some family of 
surfaces (together with their Bartnik data) moving outward in a manifold of nonnegative scalar curvature, then
$m(\Sigma_t, \gamma_t, H_t)$ is non-decreasing.  Monotonicity is often (but not universally) suggested as a 
desirable property of quasi-local mass \cite{bartnik_3_metrics}.

\begin{prob}
\label{prob_zero_type}
Determine whether the Bartnik data $(\Sigma, \gamma, \lambda_0H)$ is of zero type.  Equivalently, construct 
a static vacuum fill-in of $(\Sigma, \gamma, \lambda_0H)$.
\end{prob}
That the two above statements are equivalent follows from Proposition \ref{prop_static} and 
Theorem \ref{thm_interval}.  The precise nature of the Bartnik data rescaled with the critical parameter
$\lambda_0$ is perhaps the biggest open question of this paper.  
An affirmative answer to Problem \ref{prob_zero_type} would imply that 
$(\Sigma, \gamma, \lambda_0 H)$ admits a static vacuum fill-in. In general, constructing
static vacuum metrics with prescribed boundary data is a very difficult problem (cf. the
work of Anderson and Khuri on static vacuum asymptotically flat ``extensions'' of 
Bartnik data \cite{ak}).  

More generally, one could ask what happens
to the geometry of the class valid fill-ins of $(\Sigma, \gamma, \lambda H)$ in the limit
$\lambda \nearrow \lambda_0$.  An optimistic conjecture would be that in the limit 
$\lambda \nearrow \lambda_0$, any valid fill-in $(\Omega_\lambda, g_\lambda)$
of $(\Sigma, \gamma, \lambda H)$ satisfies:
\begin{itemize}
 \item the black holes (area-minimizing minimal surfaces) in $(\Omega_\lambda, g_\lambda)$ are shrinking to zero size
 (i.e., $\lim_{\lambda \to \lambda_0^-} m(\lambda)=0$), and
 \item the metric $g_\lambda$ is approaching a static vacuum metric in an appropriate sense.
\end{itemize}
There may be a connection between the first point and Miao's localized Riemannian Penrose inequality 
\cite{miao_local_rpi}.

The above discussion is basically a localization of the near-equality case of the positive
mass theorem \cite{schoen_yau}.  In such a 
global setting, the question is: what happens to the geometry of a sequence of asymptotically flat 
manifolds $(M_i, g_i)$ of nonnegative scalar curvature whose total mass is approaching zero?  The Riemannian Penrose 
inequality \cites{imcf, bray_RPI} shows that any black holes in $(M_i,g_i)$ must be approaching zero, and some
partial results exist for proving that $g_i$ is approaching a flat metric \cites{bartnik_tsing, dan_lee, bray_finster, lee_sormani}.

\appendix
\section{Conformal transformation of curvatures}
\label{appendix_formulas}
We repeatedly used the following formulas that relate the scalar curvature and mean curvature of conformal metrics.
Suppose $g$ and $\ol g$ are Riemannian metrics on a 3-manifold for which $\ol g = u^4 g$ for some smooth 
function $u>0$.  If $R$ and $\ol R$ are the scalar curvatures of $g$ and $\ol g$, then
\begin{equation}
\ol R = u^{-5}(-8\Delta u + R u),
\label{eq_conf_scalar_curv}
\end{equation}
where $\Delta$ is the Laplacian with respect to $g$.  
Next, suppose $S$ is a hypersurface with unit normal field $\nu$ with respect to $g$.  Then
the mean curvatures $H$ and $\ol H$ (in the direction defined by $\nu$) with respect to
$g$ and $\ol g$ satisfy:
\begin{equation}
\ol H = u^{-2} H + 4u^{-3} \nu(u).
\label{eq_conf_mean_curv}
\end{equation}

\section{Geometric measure theory}
\label{appendix_gmt}
Here is an extremely useful result from geometric measure theory on the existence and regularity
of area-minimizing surfaces.
\begin{thm}
\label{gmt_lemma}
Let $(M,g)$ be a smooth, compact Riemannian manifold of dimension \mbox{$2 \leq n \leq 7$} with boundary $\partial M$.  
Suppose $\partial M$ has positive mean curvature (i.e., inward-pointing mean curvature vector).  Given a connected component 
$S$ of $\partial M$, there exists a smooth, embedded hypersurface $\tilde S$ of zero mean curvature that minimizes
area among surfaces homologous to $S$.  Moreover, $\tilde S$ does not intersect $\partial M$.
\end{thm}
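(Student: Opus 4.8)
The plan is to treat $S$ as an integral $(n-1)$-cycle, minimize area in its integral homology class in $M$ by the standard machinery of geometric measure theory, and then invoke interior regularity in low dimensions together with a barrier argument at $\partial M$. So first I would set up the variational problem: since $S$ is a closed hypersurface it is an integral cycle, and I would minimize mass over the class $\mathcal{T}$ of integral $(n-1)$-currents $T$ with $T-S=\partial W$ for some integral $n$-current $W$ in $M$. This class contains $S$, so $\inf_{\mathcal{T}}\mathbf{M}\le\mathbf{M}(S)=\operatorname{area}(S)$; if $[S]=0$ in $H_{n-1}(M;\mathbb{Z})$ then the zero current is a competitor, the minimizer is the empty current, and there is nothing to prove, so I may assume the infimum is positive. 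For a minimizing sequence $T_j=S+\partial W_j$, compactness of $M$ together with the isoperimetric inequality bounds $\mathbf{M}(W_j)$ uniformly (indeed $W_j$ is uniquely determined by $T_j$, since a compact connected manifold with nonempty boundary carries no nonzero integral $n$-cycle), so the Federer--Fleming compactness theorem yields a subsequence with $W_j\to W_0$ and $T_j\to T_0:=S+\partial W_0$ in the flat norm; then $T_0\in\mathcal{T}$ and lower semicontinuity of mass makes $T_0$ an area-minimizer in the homology class.

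Next I would invoke interior regularity. The current $T_0$ is locally mass-minimizing in $\Int M$, so by the interior regularity theory for codimension-one area-minimizing integral currents (De Giorgi--Almgren--Simons) its support in $\Int M$ is a smooth embedded hypersurface away from a singular set of Hausdorff dimension at most $n-8$; since $n\le 7$ this singular set is empty. Hence $\tilde S:=\operatorname{spt}T_0$ is, in the interior, a smooth embedded minimal (zero mean curvature) hypersurface, and the associated current $T_0$ minimizes area in the homology class of $S$ by construction.

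The main step is to show $\tilde S$ does not meet $\partial M$, and this is where the mean-convexity hypothesis is used. Note first that $\mathbf{M}(T_0)<\mathbf{M}(S)$ strictly, since $S$ has positive mean curvature and hence is not stationary. Let $\rho=\operatorname{dist}(\cdot,\partial M)$, which is smooth on a collar $\{0\le\rho<\delta\}$; because $\partial M=\{\rho=0\}$ has mean curvature vector pointing strictly into $M$, after shrinking $\delta$ each level set $\Gamma_t=\{\rho=t\}$, $t\in[0,\delta)$, still has mean curvature vector pointing toward increasing $\rho$, giving a mean-convex foliation of the collar. If $\operatorname{spt}T_0$ missed $\partial M$ but entered $\{0<\rho<\delta\}$, then $\operatorname{spt}T_0$ would be compact and $\rho$ would attain a positive interior minimum on it at a point $q$, where $\tilde S$ is a smooth minimal hypersurface tangent to $\Gamma_{\rho(q)}$ and lying on its convex side; the strong maximum principle (Hopf's lemma for the minimal surface equation) would force $\tilde S=\Gamma_{\rho(q)}$ near $q$, contradicting that $\Gamma_{\rho(q)}$ has nonzero mean curvature. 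To exclude the possibility that $\operatorname{spt}T_0$ actually touches $\partial M$, I would run the same comparison against the leaves $\Gamma_t$ as $t\downarrow 0$, so that a first leaf of contact again violates the strong maximum principle. Either way $\operatorname{spt}T_0\cap\{\rho<\delta\}=\emptyset$, so $\tilde S$ is a compact smooth embedded minimal hypersurface contained in $\Int M$, homologous to $S$, area-minimizing in that class, and disjoint from $\partial M$.

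The two genuinely deep ingredients here — Federer--Fleming existence and the interior regularity theory that kills the singular set in dimensions $n\le 7$ — are classical and are precisely the ``extremely useful result'' being quoted, so I would cite them as black boxes. Within the self-contained part of the argument the delicate point is the last one: verifying that the mean-convex collar foliation really does prevent the minimizer from touching, or even approaching, $\partial M$. The care needed there is in checking that $\tilde S$ sits on the convex side of each leaf $\Gamma_t$ and that no regularity is lost at points of $\operatorname{spt}T_0$ near $\partial M$ before the maximum principle is applied; this is what I expect to be the main obstacle to a fully rigorous write-up.
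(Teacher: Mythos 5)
Your proposal is correct and follows essentially the same route as the paper's own sketch: minimize mass over integral currents in the homology class of $S$, extract a limit via Federer--Fleming compactness, invoke interior regularity (which kills the singular set since $n\leq 7$), and use the mean-convex boundary as a barrier. The one point you rightly flag as delicate --- ruling out contact of $\operatorname{spt}T_0$ with $\partial M$ itself, where interior regularity is unavailable and a ``first leaf of contact'' argument does not literally produce a one-sided tangency at a smooth point --- is exactly the step the paper does not argue either, deferring instead to the careful treatment in the appendix of Schoen--Yau \cite{schoen_yau}.
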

These results are essentially due to Federer and Fleming \cites{fed_flem, fleming, federer}.  The rough idea of the proof of Theorem \ref{gmt_lemma} is to take a minimizing sequence of surfaces $\{S_i\}$ (viewed as integral currents) in $[S]$, the homology class of $S$.  By the Federer--Fleming compactness theorem, some subsequence converges to a surface $\tilde S$.  Standard arguments show that $\tilde S$ remains in $[S]$ and indeed has the desired minimum of area.  Regularity theory (requiring $n \leq 7$) proves that $\tilde S$ is a smooth, embedded hypersurface.  By the first variation of area formula, $\tilde S$ has zero mean curvature and may not touch the positive mean curvature boundary (which acts as a barrier).  See the appendix
of \cite{schoen_yau} for a careful proof of the last fact.

\section{Deformations of scalar curvature near a boundary}
\label{appendix_bmn}
Here we prove the following useful lemma.
\begin{lemma}
\label{lemma_positive}
Suppose that $(\Sigma, \gamma, H_1)$ admits a valid fill-in.  If $0 < H_2 < H_1$, then
$(\Sigma, \gamma, H_2)$ admits a valid fill-in with positive scalar curvature at a point.  In particular, $(\Sigma, \gamma, H_2)$
is of positive type.
\end{lemma}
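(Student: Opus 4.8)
The plan is to enlarge the given fill-in by attaching a collar near $\Sigma$ whose metric bends the boundary mean curvature down from $H_1$ to $H_2$ while producing a region of strictly positive scalar curvature -- a deformation-near-a-boundary argument. First I would make a preliminary reduction: by Lemma~\ref{lemma_stretch}, applied with a constant $\mu\in(0,1)$ small enough that $\mu H_1<H_2$ pointwise (possible since $H_2<H_1$ and both are positive on the compact $\Sigma$), the given valid fill-in yields a valid fill-in $(\Omega,g)$ of $(\Sigma,\gamma,\mu H_1)$ with $R_g\ge 0$ everywhere and $R_g>0$ on a neighborhood of $\Sigma$. It then suffices to enlarge $(\Omega,g)$ to a valid fill-in of $(\Sigma,\gamma,H_2)$ keeping $R\ge 0$: the strict positivity of $R_g$ near $\Sigma$ both supplies the ``positive scalar curvature at a point'' in the conclusion and, more importantly, gives a buffer that makes the construction robust at the seam.

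Write $g=dt^2+G_t$ in Fermi coordinates on a one-sided collar $(-t_0,0]\times\Sigma$ of $\Sigma=\{t=0\}$ with $\partial_t$ the outward normal, shrinking $t_0$ so each slice has positive Gauss and mean curvature. I would glue on a collar $C=[0,L]\times\Sigma$ along $\Sigma\cong\{0\}\times\Sigma$, carrying a metric $\bar g=dt^2+\bar G_t$ that agrees with a smooth $t$-extension of $g$ to infinite order at $t=0$ (so the glued metric on $\Omega\cup C$ is genuinely smooth at $\Sigma$, with the slice there of mean curvature $\mu H_1$), while $\bar G_L=\gamma$ and the slice $\{L\}\times\Sigma$ has mean curvature $H_2$ with respect to the outward normal $\partial_t|_{t=L}$. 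Then $\Omega\cup C$ with this metric is a fill-in of $(\Sigma,\gamma,H_2)$, the component $\partial\Omega\sm\Sigma$ (minimal, if present) is untouched, and what remains is to arrange $R_{\bar g}\ge 0$ on $C$. The governing identity is $R_{\bar g}=-2\,\partial_t\bar H_t+2\bar K_t-\bar H_t^2-\|\bar h_t\|^2$, where $\bar h_t=\tfrac12\partial_t\bar G_t$, $\bar H_t=\tr_{\bar G_t}\bar h_t$, and $\bar K_t=K(\bar G_t)$; since $\partial_t\log\det\bar G_t=2\bar H_t$, the constraint $\bar G_0=\bar G_L=\gamma$ forces $\int_0^L\bar H_t\,dt=0$, so the profile $\bar H_t$ must dip negative between its positive endpoint values $\mu H_1$ and $H_2$. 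Positivity of $R_{\bar g}$ is harvested from the term $-2\partial_t\bar H_t$ on the stretches where $\bar H_t$ decreases; where that is not enough -- on the ``turn-around'' stretch and, when $H_2$ is large relative to $K_\gamma$, near the new boundary -- one exploits that shrinking the cross-sections conformally raises $\bar K_t$ (for $\bar G_t=\epsilon(t)\gamma$ one has $\bar K_t=K_\gamma/\epsilon$ and automatically $\epsilon'=\bar H_t\epsilon$, so $\bar K_t$ grows precisely where $\bar H_t>0$), buying enough curvature to keep $R_{\bar g}\ge0$ as $\bar G_t$ is returned to $\gamma$.

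The main obstacle is exactly this bookkeeping: producing a family $(\bar G_t)_{t\in[0,L]}$ that hits the endpoint data ($\bar G_0=\bar G_L=\gamma$, $\bar H_0=\mu H_1$ matched smoothly to $g$, $\bar H_L=H_2$) while keeping $-2\partial_t\bar H_t+2\bar K_t-\bar H_t^2-\|\bar h_t\|^2\ge0$ throughout. The design I would use: a short initial segment on which $\bar H_t$ falls quickly (so $-2\partial_t\bar H_t$ dominates and $R_{\bar g}>0$) to a small value while $\bar G_t$ stays near $\gamma$; a long middle segment with $\bar H_t$ slightly negative and $\bar G_t\approx\gamma$, where $R_{\bar g}\approx 2K_\gamma>0$ and $\det\bar G_t$ drifts down just enough to compensate the two end rises; and a final segment adjacent to the new boundary on which $\bar H_t$ climbs (overshooting and then falling back) to $H_2$ while $\bar G_t$ expands to $\gamma$, steered along the borderline ODE $\partial_t\bar H_t=\bar K_t-\tfrac34\bar H_t^2$ (for which $R_{\bar g}\equiv0$ on umbilic slices) with the shrink $\bar G_t=\epsilon(t)\gamma$ supplying the needed $\bar K_t$, and then perturbed slightly to make $R_{\bar g}$ strictly positive. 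That the endpoint constraints are compatible comes down to the identity $\int_0^L\bar H_t\,dt=0$ together with the freedom in the ``shape'' of $\bar G_t$ beyond its determinant; and that we are inside the regime where such a $C$ can exist at all is guaranteed by Shi--Tam (Theorem~\ref{thm_shi_tam}): since $(\Sigma,\gamma,H_1)$ has a valid fill-in and $H_2<H_1$, one has $\int_\Sigma H_2\,dA_\gamma<\int_\Sigma H_1\,dA_\gamma\le\int_\Sigma H_0\,dA_\gamma$, comfortably below the obstruction to filling in data whose mean curvature is too large. Finally, that the resulting valid fill-in with positive scalar curvature at a point forces positive type is the implication $(2)\Rightarrow(1)$ of Proposition~\ref{prop_positive}, whose proof does not use this lemma.
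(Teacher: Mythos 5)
Your overall strategy departs from the paper's at the crucial step, and it is at that step that the argument has a genuine gap. The paper chooses $\alpha\in(0,1)$ with $\alpha H_1>H_2$ (not $<H_2$), obtains via Lemma \ref{lemma_stretch} a valid fill-in of $(\Sigma,\gamma,\alpha H_1)$ with $R>0$ near $\Sigma$, and then \emph{lowers} the boundary mean curvature from $\alpha H_1$ to $H_2$ by replacing the metric on an interior collar, invoking Theorem 5 of Brendle--Marques--Neves \cite{bmn}; that theorem is exactly the gluing device that handles the transition region, and its hypothesis is precisely the strict inequality $\alpha H_1>H_2$. You instead drop the mean curvature below the target ($\mu H_1<H_2$) and then try to climb back up by attaching an exterior collar $[0,L]\times\Sigma$ with $\bar G_0=\bar G_L=\gamma$, $\bar H_0=\mu H_1$, $\bar H_L=H_2$, and $R_{\bar g}\ge 0$. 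Raising the mean curvature while returning the induced metric to $\gamma$ is the hard direction (it is the direction obstructed by Shi--Tam), and your construction of such a collar remains a sketch at exactly the point where the work lies.

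Concretely: the mechanism you propose for generating the Gauss curvature needed on the ``climb'' is the conformal shrink $\bar G_t=\epsilon(t)\gamma$, but this forces $\bar H_t=\epsilon'(t)/\epsilon(t)$ to be \emph{constant on each slice}, which is incompatible with the endpoint conditions $\bar H_0=\mu H_1(x)$ and $\bar H_L=H_2(x)$ for non-constant data. Once $\bar G_t$ is not conformal to $\gamma$, the terms $\bar K_t$ and $\|\bar h_t\|^2$ in (\ref{eq_second_variation}) are no longer controlled by your bookkeeping ($\|\bar h_t\|^2$ strictly exceeds $\tfrac12\bar H_t^2$ off the umbilic case, and $\bar K_t$ is not $K_\gamma/\epsilon$), and the pointwise constraint $\int_0^L\bar H_t(x)\,dt=0$ must be met by a profile whose endpoint values and required ``negative area'' vary with $x$, all while $R_{\bar g}\ge0$ holds globally. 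Appealing to ``the freedom in the shape of $\bar G_t$ beyond its determinant'' is exactly where a real deformation theorem is needed rather than an ODE heuristic; the paper supplies it with \cite{bmn}, and with your inequality pointing the other way that theorem does not apply. (Your preliminary use of Lemma \ref{lemma_stretch}, the exploitation of $R>0$ near $\Sigma$ as a buffer, and the final appeal to the implication $(2)\Rightarrow(1)$ of Proposition \ref{prop_positive} --- which is indeed not circular --- all parallel the paper; only the collar construction, which carries the whole weight of the proof, is missing.)
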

Although we only prove the case $K_\gamma > 0$ here, Lemma \ref{lemma_positive} is true without this hypothesis.
The proof is an application of techniques developed recently by Brendle, Marques, and Neves \cite{bmn}.  
\begin{proof}
\emph{Step 1:  } We construct a valid fill-in of $(\Sigma, \gamma)$ with mean curvature strictly greater than 
$H_2$ and with positive scalar curvature in a neighborhood of $\Sigma$.  

Since $\Sigma$ is compact, we may choose $\alpha \in (0,1)$ so that $\alpha H_1 > H_2$.  We proved in 
step 2 of Theorem \ref{thm_interval} that $(\Sigma, \gamma, \alpha H_1)$ is of positive type
and moreover admits a valid fill-in
$(\Omega, g_1)$ whose scalar curvature is strictly positive in a neighborhood $U$ of $\Sigma$. (For the latter statement,
refer to equation (\ref{eqn_R_tilde_g}) and note that $\rho'(0)>0$.)  

\emph{Step 2:  } We define a metric $g_2$ on $\Omega$ as follows, with the goal of making the boundary mean
curvature of $g_2$ equal to $H_2$.  First, consider a neighborhood of $\Sigma$ contained in $U$ that is diffeomorphic to
 $\Sigma \times (-t_0, 0]$
(where $t=0$ corresponds to $\Sigma$).  Define for $x \in \Sigma$ and $t \in (-t_0,0]$:
$$g_2(x,t) = \rho(t)^2dt^2 +(1+tH_2(x)) \gamma(x),$$
where $\rho(t)$ is a function satisfying $\rho(0)=1$ and will be specified later.  It is readily checked
that $g_2$ induces on $\Sigma$ the metric $\gamma$ and mean curvature $H_2$.  Shrinking $t_0$ if necessary
and choosing $\rho(t)$ bounded below by a positive constant with $\rho'(t)>0$ sufficiently large, 
we may arrange $g_2$ to have strictly positive scalar curvature on $\Sigma \times (-t_0,0]$.  This is readily checked using
equation (\ref{eq_second_variation}).  
Now, extend $g_2$ arbitrarily to a smooth
metric on $\Omega$ (not necessarily preserving nonnegative scalar curvature).  Replace $U$ with the smaller
neighborhood $\Sigma \times (-t_0,0]$

To summarize, we have two metrics $g_1$ and $g_2$ on the compact manifold $\Omega$, inducing
boundary data $(\Sigma, \gamma, \alpha H_1)$ and $(\Sigma, \gamma, H_2)$, respectively, each with positive scalar
curvature on the neighborhood $U$ of $\Sigma$.  By compactness, the scalar curvatures of $g_1|_U$ and $g_2|_U$
are bounded below by a constant $R_0>0$.

\emph{Step 3:  } Apply Theorem 5 of Brendle--Marques--Neves \cite{bmn} to produce a metric $\hat g$ on $\Omega$ 
satisfying the following properties\footnote{Due to the local nature of the construction, it is clear that we can ignore 
any connected components of $\partial\Omega$ that are not $\Sigma$.}:
\begin{enumerate}
 \item[(i)] $R_{\hat g}(x) \geq \min \{R_{g_1}(x), R_{g_2}(x) \} - \frac{R_0}{2}$,
 \item[(ii)] $\hat g$ agrees with $g_1$ outside of $U$,
 \item[(iii)] $\hat g$ agrees with $g_2$ in some neighborhood of $\Sigma$.
\end{enumerate}
(To apply the theorem, it is crucial that $\alpha H_1 > H_2$.) 

By the third condition, $(\Omega, \hat g)$ is a fill-in of $(\Sigma, \gamma, H_2)$.  By the first and second
conditions, $\hat g$ has nonnegative (but not identically zero) scalar curvature and $\partial \Omega \sm \Sigma$ (if nonempty) is a minimal
surface.  In particular, $(\Omega, \hat g)$ is a valid fill-in with positive scalar curvature at some point.

Finally, the last statement in the lemma follows from Proposition \ref{prop_positive}.
\end{proof}

\begin{bibdiv}
 \begin{biblist}

\bib{anderson}{article}{
   author={Anderson, M.},
   title={On quasi-local Hamiltonians in general relativity},
   eprint={http://arxiv.org/abs/1008.4309},
   date={2010}
}

\bib{ak}{article}{
   author={Anderson, M.},
   author={Khuri, M.},
   title={The static extension problem in general relativity},
   eprint={http://arxiv.org/abs/0909.4550},
   date={2011}
}

\bib{aubin}{book}{
   author={Aubin, T.},
   title={Some nonlinear problems in Riemannian geometry},
   series={Springer Monographs in Mathematics},
   publisher={Springer-Verlag},
   place={Berlin},
   date={1998}
}

\bib{bartnik_mass}{article}{
   author={Bartnik, R.},
   title={A new definition of quasi-local mass},
   conference={
      title={ Relativity, Part A, B},
      address={Perth},
      date={1988},
   },
   book={
      publisher={World Sci. Publ., Teaneck, NJ},
   },
   date={1989},
   pages={399--401}
}

\bib{bartnik_tsing}{article}{
   author={Bartnik, R.},
   title={Energy in general relativity},
   conference={
      title={Tsing Hua lectures on geometry \& analysis},
      address={Hsinchu},
      date={1990--1991},
   },
   book={
      publisher={Int. Press, Cambridge, MA},
   },
   date={1997},
   pages={5--27}
}

\bib{bartnik_3_metrics}{article}{
   author={Bartnik, R.},
   title={Mass and 3-metrics of non-negative scalar curvature},
   conference={
      title={},
      address={Beijing},
      date={2002},
   },
   book={
      publisher={Higher Ed. Press},
      place={Beijing},
   },
   date={2002},
   pages={231--240}
}

\bib{bartnik_qs}{article}{
   author={Bartnik, R.},
   title={Quasi-spherical metrics and prescribed scalar curvature},
   journal={J. Differential Geom.},
   volume={37},
   date={1993},
   number={1},
   pages={31--71}
}

\bib{bray_RPI}{article}{
   author={Bray, H.},
   title={Proof of the Riemannian Penrose inequality using the positive mass theorem},
   journal={J. Differential Geom.},
   volume={59},
   date={2001},
   number={2},
   pages={177--267}
}

\bib{bray_finster}{article}{
   author={Bray, H.},
   author={Finster, F.},
   title={Curvature estimates and the positive mass theorem},
   journal={Comm. Anal. Geom.},
   volume={10},
   date={2002},
   number={2},
   pages={291--306}
}

\bib{bmn}{article}{
   author={Brendle, S.},
   author={Marques, F. C.},
   author={Neves, A.},
   title={Deformations of the hemisphere that increase scalar curvature},
   journal={Invent. Math.},
   volume={185},
   date={2011},
   number={1},
   pages={175--197}
}

\bib{bma}{article}{
   author={Bunting, G. L.},
   author={Masood-ul-Alam, A. K. M.},
   title={Nonexistence of multiple black holes in asymptotically Euclidean
   static vacuum space-time},
   journal={Gen. Relativity Gravitation},
   volume={19},
   date={1987},
   number={2},
   pages={147--154}
}

\bib{corvino}{article}{
   author={Corvino, J.},
   title={Scalar curvature deformation and a gluing construction for the
   Einstein constraint equations},
   journal={Comm. Math. Phys.},
   volume={214},
   date={2000},
   number={1},
   pages={137--189}
}

\bib{fan_shi_tam}{article}{
   author={Fan, X.-Q.},
   author={Shi, Y.},
   author={Tam, L.-F.},
   title={Large-sphere and small-sphere limits of the Brown-York mass},
   journal={Comm. Anal. Geom.},
   volume={17},
   date={2009},
   number={1},
   pages={37--72}
}

\bib{federer}{article}{
   author={Federer, H.},
   title={The singular sets of area minimizing rectifiable currents with
   codimension one and of area minimizing flat chains modulo two with
   arbitrary codimension},
   journal={Bull. Amer. Math. Soc.},
   volume={76},
   date={1970},
   pages={767--771}
}

\bib{fed_flem}{article}{
   author={Federer, H.},
   author={Fleming, W.},
   title={Normal and integral currents},
   journal={Ann. of Math. (2)},
   volume={72},
   date={1960},
   pages={458--520}
}

\bib{fleming}{article}{
   author={Fleming, W.},
   title={On the oriented Plateau problem},
   journal={Rend. Circ. Mat. Palermo (2)},
   volume={11},
   date={1962},
   pages={69--90}
}

\bib{imcf}{article}{
   author={Huisken, G.},
   author={Ilmanen, T.},
   title={The inverse mean curvature flow and the Riemannian Penrose inequality},
   journal={J. Differential Geom.},
   volume={59},
   date={2001},
   number={3},
   pages={353--437}
}

\bib{dan_lee}{article}{
   author={Lee, D. A.},
   title={On the near-equality case of the positive mass theorem},
   journal={Duke Math. J.},
   volume={148},
   date={2009},
   number={1},
   pages={63--80}
}

\bib{lee_sormani}{article}{
    author={Lee, D. A.},
    author={Sormani, C.},
    title={Stability of the Positive Mass Theorem for Rotationally Symmetric Riemannian Manifolds},
    eprint={http://arxiv.org/abs/1104.2657},
    year={2011}
}

\bib{miao_local_rpi}{article}{
   author={Miao, P.},
   title={On a localized Riemannian Penrose inequality},
   journal={Commun. Math. Phys.},
   volume={292},
   date={2009},
   number={1},
   pages={271-284}
}

\bib{miao}{article}{
   author={Miao, P.},
   title={Positive mass theorem on manifolds admitting corners along a
   hypersurface},
   journal={Adv. Theor. Math. Phys.},
   volume={6},
   date={2002},
   number={6},
   pages={1163--1182}
}

\bib{penrose}{article}{
   author={Penrose, R.},
   title={Some unsolved problems in classical general relativity},
   conference={
      title={Seminar on Differential Geometry},
   },
   book={
      series={Ann. of Math. Stud.},
      volume={102},
      publisher={Princeton Univ. Press},
      place={Princeton, N.J.},
   },
   date={1982},
   pages={631--668}
}

\bib{isoperimetric}{book}{
   author={P{\'o}lya, G.},
   author={Szeg{\"o}, G.},
   title={Isoperimetric Inequalities in Mathematical Physics},
   series={Annals of Mathematics Studies, no. 27},
   publisher={Princeton University Press},
   place={Princeton, N. J.},
   date={1951}
}

\bib{schoen_yau}{article}{
	author={Schoen, R.},
	author={Yau, S.-T.},
	title={On the proof of the positive mass conjecture in general relativity},
	journal={Commun. Math. Phys.},
	volume={65},
	year={1979},
	pages={45--76}
}

\bib{shi_tam}{article}{
   author={Shi, Y.},
   author={Tam, L.-F.},
   title={Positive mass theorem and the boundary behaviors of compact
   manifolds with nonnegative scalar curvature},
   journal={J. Differential Geom.},
   volume={62},
   date={2002},
   number={1},
   pages={79--125}
}

\bib{sza}{article}{
   author={Szabados, L. },
   title={Quasi-local energy-momentum and angular momentum in
general relativity},
   journal={Living Rev. Relativity},
   volume={12},
   date={2009},
   number={4},
  eprint={http://www.livingreviews.org/lrr-2009-4}
}

\bib{wang_yau}{article}{
   author={Wang, M.-T.},
   author={Yau, S.-T.},
   title={Isometric embeddings into the Minkowski space and new quasi-local
   mass},
   journal={Comm. Math. Phys.},
   volume={288},
   date={2009},
   number={3},
   pages={919--942}
}

\end{biblist}
\end{bibdiv}
\end{document}